\documentclass{scrartcl}


\usepackage{lmodern}
\usepackage[utf8]{inputenc}
\usepackage[T1]{fontenc}
\usepackage[english]{babel}
\usepackage{csquotes}

\usepackage[backend=biber,style=alphabetic,url=false,giveninits=true,sorting=anyt]{biblatex} 
\addbibresource{references.bib}

\usepackage{amsmath,amssymb,amsthm}
\usepackage{mathtools,thmtools,stmaryrd,esint}
\usepackage{mathrsfs}
\usepackage{nicefrac}

\usepackage{hyperref}
\hypersetup{colorlinks=true}
\usepackage[shortlabels]{enumitem}
\usepackage[nameinlink]{cleveref}
\crefname{equation}{}{}
\crefname{enumi}{}{}

\usepackage[svgnames]{xcolor}
\usepackage{graphicx}
\usepackage[disable]{todonotes}
\usepackage{longtable,tabu}
\usepackage{bm} 
\usepackage{float}



\newcommand{\pap}[2][]{\todo[author=Paul,color=green!20,bordercolor=green!20,inline,#1]{#2}}

\newcommand{\papside}[2][]{\todo[author=Paul,color=green!20,bordercolor=green!20,#1]{#2}}

\newcommand{\intodo}[2][]{\todo[color=orange!20,bordercolor=orange!20,inline,#1]{TODO:~#2}}

\newcommand{\N}{\mathbb{N}}

\newcommand{\R}{\mathbb{R}}
\newcommand{\sph}{\mathbb{S}}

\newcommand{\LL}{\mathcal{L}}
\newcommand{\C}{\mathcal{C}}


\newcommand{\eps}{\varepsilon}
\newcommand{\hdm}{{\mathscr H}}

\newcommand{\one}{\mathbf{1}}
\newcommand{\dd}{\mathop{}\mathopen{}\mathrm{d}}

\newcommand{\fundef}[5]{%
\begin{array}[t]{lrcl}
#1 : & #2 & \longrightarrow & #3 \\
    & #4 & \longmapsto & #5
    \end{array}
}
\newcommand{\genrel}{\mathrel{\mathcal R}}

\DeclarePairedDelimiter\abs{\lvert}{\rvert}
\DeclarePairedDelimiter\norm{\lVert}{\rVert}

\DeclareMathOperator{\len}{length}

\DeclareMathOperator{\spn}{span}
\DeclareMathOperator{\tanspace}{Tan}

\DeclareMathOperator{\spt}{spt}

\DeclareMathOperator{\esssup}{ess\,sup}

\DeclareMathOperator{\intr}{int}
\DeclareMathOperator{\dom}{dom}
\DeclareMathOperator{\cc}{cone}
\DeclareMathOperator{\cl}{cl}
\DeclareMathOperator{\clcc}{\overline{cone}}
\DeclareMathOperator{\dist}{dist}
\DeclareMathOperator{\relint}{ri}
\DeclareMathOperator{\nop}{N\!}



\numberwithin{equation}{section}

\declaretheorem[name=Theorem,within=section]{thm}
\declaretheorem[name=Lemma,numberlike=thm]{lem}
\declaretheorem[name=Proposition,numberlike=thm]{prp}
\declaretheorem[name=Corollary,numberlike=thm]{cor}

\declaretheorem[name=Definition,numberlike=thm,style=definition]{dfn}

\declaretheorem[name=Remark,numberlike=thm,style=remark]{rmk}

\declaretheorem[name=Example,numberlike=thm,style=remark]{xmp}
\declaretheorem[name=Problem,numberlike=thm,style=remark]{pbm}

\title{A Rockafellar Theorem for cyclically quasi-monotone maps: the regular non-vanishing case}
\author{Luigi De Pascale{\thanks{Dipartimento di Matematica e Informatica, Università di Firenze, Firenze, Italy. Email: \texttt{luigi.depascale@unifi.it}}}, Paul Pegon\thanks{CEREMADE, Université Paris--Dauphine, Université PSL, CNRS \& MOKAPLAN, Inria Paris, 75016 Paris, France. Email: \texttt{pegon@ceremade.dauphine.fr}}}
\date{\today}

\begin{document}
\maketitle
\begin{abstract}
We study the connection between cyclic quasi-monotonicity and quasi-convexity, focusing on whether every cyclically quasi-monotone (possibly multi-valued) map is included in the normal cone operator of a quasi-convex function, in analogy with Rockafellar's theorem for convex functions. We provide a positive answer for $\mathscr{C}^1$-regular, non-vanishing maps in any dimension, as well as for general multi-maps in dimension $1$. We further discuss connections to revealed preference theory in economics and to $L^\infty$ optimal transport. Finally, we present explicit constructions and examples, highlighting the main challenges that arise in the general case.
\end{abstract}

\vskip\baselineskip\noindent
\textit{Keywords.} quasi-monotonicity, quasi-convexity, optimal transport, $L^\infty$ optimal transport, convex analysis, revealed preference, Rockafellar theorem.\\
\textit{2020 Mathematics Subject Classification.}  Primary: 47H05; Secondary: 49Q22, 47N10.

\tableofcontents

\intodo{Write example section: hat example, 2 points + half space example, example with plateau, stratified example (0-d, then 1d, then 2d convex sets), three quarters of a norm, half a norm, half norm half constant, half circle}
\intodo{Remark: our conjecture of totally ordered-ness in two half spaces is WRONG (see hat example). Maybe our conjecture becomes true if we assume that $S$ is everywhere nonzero (which is maybe the case if $S$ is maximal)? In that case, if $C(x)$ has nonempty interior and $y\in \partial C(x)$, if $F$ is cone-valued and closed and non-degenerate, it seems we can show that $N_{C(x)}(y) \supseteq F(y)$.  Write the proofs!}

\section{Introduction}


Our work deals with the relation between the notions of cyclic quasi-monotonicity and quasi-convexity. We shall start by discussing the latter, then move on to cyclic quasi-monotonicity.

A \emph{quasi-convex function} $f : \R^d \to \R\cup\{+\infty\}$ is a function whose sub-level sets $\{x : f(x) \leq \ell\}$ are convex for every $\ell \in \R$. Historically, quasi-convexity (or quasi-concavity) has been introduced in the first half of the 19th century through the interplay between mathematics and economics, specifically utility and decision theory. It seems (see \cite{guerraggioOriginsQuasiconcavityDevelopment2004}) that Von Neumann was the first one to make use of their defining properties, as a technical condition (without giving a definition) for his minimax theorem, in his monograph on game theory \cite{vonneumannZurTheorieGesellschaftsspiele1928}. The notion was properly introduced by de Finetti in his article on \enquote{convex stratifications} \cite{definettiSulleStratificazioniConvesse1949}, where he studied their properties and their relation with totally ordered families of convex sets. The modern and rigorous study of sub-level sets of convex and quasi-convex functions was initiated by Fenchel in his 1953 monograph \cite[\S7-8]{fenchel1953convex}. We refer to Guerragio's very interesting article \cite{guerraggioOriginsQuasiconcavityDevelopment2004} to know more about the origins of this notion.

Nowadays, quasi-convexity (or quasi-concavity) is widely used in optimization and game theory (as a common assumption for objective functionals or to apply Von Neumann's or Sion's minimax theorem), in economics (as a common assumption for utility functions in consumer theory), and calculus of variations (when supremal functionals are involved). We will elaborate later on the strong connections between the problem we address and both the theory of revealed preferences in economics and optimal transportation.
\pap{Provide references? And there was a link with PDE or control theory, if I recall?}

A \emph{multi-map} or \emph{multi-valued operator} $F: \R^d \multimap \R^d$ (which is nothing but a map from $\R^d$ to $2^{\R^d}$) is cyclically quasi-monotone if for every sequence of points $x_0, \ldots, x_N  \in \R^d$ satisfying $x_N = x_0$, and every $p_0 \in F(x_0),\ldots, p_{N-1} \in F(x_{N-1})$ it holds:
\[\min_{0\leq i < N} (x_{i+1}-x_i)\cdot p_i \leq 0.\]
We may trace back this notion to the papers \cite{levinQuasiconvexFunctionsQuasimonotone1995,daniilidisSubdifferentialsQuasiconvexPseudoconvex1999}. The weaker notion of \emph{quasi-monotonicity} corresponds to considering sequences only made of two points, i.e. imposing $N=1$.

Cyclically quasi-monotone operators naturally arise from quasi-convex functions $f$ through their \emph{normal cone multi-map}, defined as
\begin{equation*}
    \forall x\in \R^d, \qquad \nop f(x) \coloneqq N_{\{y:f(y)\leq f(x)\}}(x)
\end{equation*}
where $N_C(x) \coloneqq \partial \chi_C(x)$ is the normal cone\footnote{Here $\chi_C$ stands for the convex indicator of $C$ and $\partial$ to the classical Fenchel-Moreau subdifferential operator for convex functions.} to the convex set $C$ at $x$. Indeed, if $f$ is quasi-convex then $N_F$ is a cyclically quasi-monotone multi-map (as stated in \Cref{normal_is_qm}). We can draw a parallel with the usual notions of convexity and cyclical monotonicity, the normal cone operator replacing the subdifferential operator of convex functions: indeed, it is well-known that $\partial f$ is cyclically monotone whenever $f$ is convex. In fact converse results exist for the classical notions of convexity and (cyclical) monotonity:
\begin{itemize}
    \item by \cite{poliquinSubgradientMonotonicityConvex1990,correaSubDifferentialMonotonicity1994}, for every function $f$, the Clarke-Rockafellar subdifferential $\partial f$ is monotone (even cyclically) if and if $f$ is convex;
    \item by the celebrated theorem of Rockafellar (proved in \cite{rockafellarCharacterizationSubdifferentialsConvex1966}, see also \cite[Theorem~24.8]{rockafellarConvexAnalysis2015}), an operator $F$ is cyclically monotone if and only if $F\subseteq \partial f$ for some lower semi-continuous proper convex function $f : \R^d \to \R\cup\{+\infty\}$.
\end{itemize}

We are tempted to wonder if such converse statements exist for the corresponding \enquote{quasi} notions, and some suitable normal cone operator replacing $\partial$. Thus, recalling the questions originally studied by de Finetti \cite{definettiSulleStratificazioniConvesse1949} and Fenchel \cite[\S7-8]{fenchel1953convex}, we highlight three classes of problems related to (quasi-)convexity and (quasi-)monotonicity:
\begin{enumerate}
    \item \emph{The characterization of sub-level sets of convex functions.} Given a totally ordered family $\C$ of convex sets (indexed or not), is there a (quasi-)convex function $f$ whose level sets belong to $\C$. If not, can we characterize those $\C$'s that are the sub-level sets of convex functions? The answer is true when the convex sets are closed, if we only ask $f$ to be quasi-convex (an indexed version is shown in \Cref{existence_potentials}). The answer is no if one asks $f$ to be convex, but some sufficient conditions were provided by de Finetti \cite{definettiSulleStratificazioniConvesse1949}, Fenchel \cite[\S7-8]{fenchel1953convex} and more recently by Rapcsák \cite{rapcsakFenchelProblemLevel2005,rapcsakErratumFenchelProblem2013} in particular. We refer to \cite{rapcsakSurveyFenchelProblem2005} for a survey on this problem, sometimes refered to as \emph{Fenchel problem of level sets}.
    \item \emph{The subdifferential characterization of quasi-convexity.} Can we characterize, in the same way as for convexity, the quasi-convexity of a function $f$ by the quasi-monotonicity of a suitable first-order operator $Tf$, like a subdifferential or normal cone operator? This has been answered positively in \cite{ausselSubdifferentialCharacterizationQuasiconvexity1994} when $T$ is the Clarke-Rockafellar subdifferential, and partial answers have been given for different notions of normal cone operators by several authors like Aussel, Crouzeix, Daniilidis and Penot to name a few \cite{ausselNormalCharacterizationMain2000,ausselNormalConesSublevel2001,bordeContinuityPropertiesNormal1990,penotAreGeneralizedDerivatives1998}.
    \item \emph{The integration of cyclically quasi-monotone operators.} Is any cyclically quasi-monotone operator included in the graph of a first-order (of subdifferential or normal cone type) operator associated with a quasi-convex function? In other words, is there \enquote{quasi} counterpart to Rockafellar integration theorem known for cyclically monotone operators, for some replacement of the subdifferential operator? A partial answer has been given, under extra assumptions on the operator, with the lower subdifferential operator of Plastria by Bachir, Daniilidis and Penot in \cite{bachirLowerSubdifferentiabilityIntegration2002}. Another partial answer has been given for the normal cone operator $N$, under some regularity and non-degeneracy assumptions by Crouzeix, Eberhard and Ralph in \cite{eberhardConstructiveProofExistence2012}, through an \enquote{indirect} statement that is not completely equivalent.
\end{enumerate}

Our work deals with the third question, that can be set precisely as follows.

\begin{pbm}\label{main_problem}
Given a cyclically quasi-monotone multi-map $F :  \R^d \multimap \R^d$, can we build a quasi-convex function $f : \R^d \to \R\cup\{+\infty\}$, which satisfies
\[F \subseteq \nop f?\]
Such a function $f$ will be called a \emph{potential} of $F$. We shall restrict ourselves to the case of finite and lower semi-continuous potentials, as justified in \Cref{finite_lsc_potentials}.
\end{pbm}

Before stating our results and describing the structure of the article, let us stress the connection of this problem with consumer theory in economics, more precisely to revealed preference theory, and optimal transport theory.

\subsection*{Relation with revealed preference theory}

Revealed preference theory in economics, introduced by Samuelson in 1938 \cite{samuelsonNotePureTheory1938} is a theory that postulates the rational behaviour of consumers. Given $N$ bundles of goods $(x_1,\ldots, x_N) \in \R^d$ and the observation of prices $(p_1, \ldots, p_N) \in \R^d$ at which the consumer bought those goods, we may infer a \enquote{preference relation} by the following: $x_i$ is preferred to $x$, if $p_i \cdot x_i \geq p_i \cdot x$, i.e. the consumer could have chosen bundle $x$ with the same (or a lower) budget $p_i \cdot x$ but decided to buy $x_i$ instead.

In revealed preference theory, if certain axioms ensuring the consistency of the consumer’s preferences are satisfied, it is possible to deduce that the observed choices result from the maximization of a utility function $u$; that is, the preference relation $\preceq$ corresponds (in our convention) to the minimization of a (quasi-)convex function $f = -u$. One says that $f$ (or $u$) rationalizes the observed data. The axioms are called the \emph{(weak/strong/generalized) axioms of revealed preference} (WARP, SARP, GARP respectively)\papside{Provide definitions of WARP, GARP, SARP}, and the main result in the area is the so-called \emph{Afriat Theorem}: if finitely many data $(x_1,p_1),\ldots, (x_N,p_N) \in \R^d$ satisfy (GARP), then there exists a continuous convex nonsatiated\footnote{Meaning that $f$ has no local minimum.} function $f$ rationalizing the data.

It turns out that $(x_1,p_1),\ldots, (x_N,p_N)$ satisfied GARP if and only if the operator $F$ defined by $F(x_i) = -p_i$ is cyclically quasi-monotone, and the rationalization condition \enquote{if $x_i$ is preferred to $x$ then  $u(x_i) \geq u(x)$} rewrites as
\[\forall x \in \{f < f(x_i)\}, \quad F(x_i) \cdot(x-x_i) < 0,\]
which implies, by nonsatiation of $f$, that
\[\forall x \in \{f \leq f(x_i)\}, \quad F(x_i) \cdot(x-x_i) \leq 0,\]
which exactly means that $F \subseteq \nop f$. In particular, Afriat Theorem solves \Cref{main_problem} in the case of an operator $F$ with finite domain. Reihard John may have been the first to notice the relation between cyclical quasi-monotonicity and Afriat Theorem in \cite{johnNoteMintyVariational2001}. We are actually trying to prove a slightly weaker version of Afriat Theorem but for infinite bundles. Of course, asking for concavity is too much in this case, but quasi-concavity is reasonable.

\subsection*{Relation with optimal transport}

Classical optimal transport consists in the following minimization problem, given two compactly probability measures $\mu,\nu$ over $\R^d$,
\[\inf \quad\left\{\int_{\R^d \times \R^d} c(x,y) \dd\gamma(x,y) : \gamma \in \Pi(\mu,\nu)\right\},\]
where $\gamma$ is the set of probability measures on $\R^d \times \R^d$ having $(\mu,\nu)$ as marginals, and $c : \R^d \times \R^d \to \R\cup\{+\infty\}$ is a given cost function, typically $c(x,y) = \abs{y-x}^2$ (see for instance \cite{santambrogioOptimalTransportApplied2015}). When $c$ is continuous, it may be shown that $\gamma$ is optimal if and only if $\spt \gamma$ is $c$-cyclically monotone, i.e. for every $(x_0,y_0),\ldots, (x_N,y_N)$ with $(x_N,y_N) = (x_0,y_0)$,
\[\sum_{i=0}^{N-1} c(x_i,y_i) \leq \sum_{i=0}^{N-1} c(x_i,y_{i+1}).\]

The first relation between optimal transport and our problem is through the revealed preference theory discussed above. Indeed, when $c(x,y) = \log(x\cdot y)$ and $\mu,\nu$ are supported on $\R_+^d$, it turns out that $c$-cyclical monotonicity corresponds to yet another axiom or revelead preference, the \emph{homogeneous axiom of revealed preference} (HARP), in which case finite data satisfying (HARP) can be rationalized by a positive homogeneous utility function, as shown in \cite{kolesnikovRemarksAfriatsTheorem2013}

The second link between our problem and optimal transport lies in $L^\infty$ optimal transport, consisting in replacing the integral functional by a supremal cost:
\[\inf \quad\left\{\gamma-\esssup_{(x,y)} c(x,y) : \gamma \in \Pi(\mu,\nu)\right\}.\]
The problem has been thoroughly studied in \cite{champion$infty$WassersteinDistanceLocal2008} in the case of $c(x,y) = \abs{y-x}$ (or any power of it, since the problem is unchanged). Notice that in the classical optimal transport problem, taking the quadratic cost $c(x,y) = \abs{y-x}^2$ or $c(x,y) = - x\cdot y$ is equivalent, but this is no longer the case in the $L^\infty$ framework. Besides, in $L^\infty$ theory, $c$-cyclical monotonicity is replaced with $c$-$\infty$-cyclical monotonicity:
\[\max_{0\leq i < N} c(x_i,y_i) \leq \max_{0\leq i < N} c(x_i,y_{i+1}).\]
It is known that $c$-$\infty$-cyclical monotonicity of $\spt \gamma$ no longer characterizes general solutions of $L^\infty$ optimal transport, but characterizes special solutions known as \emph{restrictable solutions} (at least when $c$ is the Euclidean distance). When $c(x,y) = -x \cdot y$, $c$-$\infty$-cyclical monotonicity rewrites as
\[\min_{0\leq i < N} x_i \cdot y_i \geq \min_{0\leq i < N} x_i \cdot y_{i+1} = \min_{0\leq i < N} (x_i \cdot y_i + x_i \cdot (y_{i+1}- y_i)),\]
thus it implies
\[\min_{0\leq i < N} x_i \cdot (y_{i+1}- y_i) \leq 0,\]
which \emph{exactly matches} the notion of cyclical quasi-monotonicity. Recovering a potential for cyclically quasi-monotone (multi-)maps appears then as an important step towards recovering some kind of replacement for Kantorovich potentials, in the context of $L^\infty$ optimal transport with $c(x,y) = -x \cdot y$. This $L^\infty$ optimal transport problem may have some application, since it corresponds to a limit Value-at-Risk functional $\text{VaR}_\alpha$ when $\alpha \to 0^+$ with correlation cost, i.e. to the minimization of $\text{VaR}_0(X\cdot Y)$ among random variables $(X,Y)$ with fixed marginals. This correlation cost with fixed marginals is considered in particular in \cite{ruschendorfSolutionStatisticalOptimization1983}.

\pap{Relation with the paper of Rüschendorf 1983?}

\subsection*{Main results, strategy and structure of the paper}

Our main contribution in this article is the resolution of \Cref{main_problem} when $F : \R^d \to \R^d$ is a map (single-valued) which is of class $\mathscr C^1$ and never vanishes, as stated in \Cref{main_theorem}. The proof consists in the following strategy: build from $F$ a preference relation that has convex and closed sub-level sets, then a lower semi-continuous quasi-convex functions with these exact levels. In all generality, the relation we define is a pre-order and not a preference relation, since it is not total. Nevertheless, we also provide a solution under no extra assumption in dimension $d=1$, as well as a conditional result, stating that the problem is solved when the relation is a preference relation. Finally, we try to illustrate the difficulties ahead for the general case with a series of examples.

The article is organized as follows: in \Cref{section_maps_potentials} we show the identification between classes of potentials, preference relations and set families, thus recovering a potential from a preference relation, and we define a candidate preference relation for a given cyclically quasi-monotone maps; \Cref{sec:1d} is devoted to the $1$-dimensional case; \Cref{sec:nd} to the $d$-dimensional case with the $\mathscr C^1$ regularity and non-vanishing conditions; the article ends with \Cref{sec:examples} providing many examples.

\section{From cyclically quasi-monotone maps to quasi-convex potentials}\label{section_maps_potentials}

A \emph{multi-valued map} (or \emph{multi-map}) $F :  \R^d \multimap \R^d$ is a map from $\R^d$ to $2^{\R^d}$. Its \emph{domain} consists of all points $x$ such that $F(x) \neq \emptyset$. In our work we shall always consider maps with \emph{full domain}, i.e. whose domain is $\R^d$. Moreover, we shall, from time to time, assimilate $F$ with its graph $ \{(x,y) \in \R^d \times \R^d : y \in F(x)\}$, thus writing $(x,y)\in F$ instead of $y \in F(x)$.

\begin{dfn}[Paths, cycles, ascending paths and cycles]
A \emph{discrete path} on $\R^d$ is a finite sequence of points $x_0, \ldots, x_N \in \R^d$. Such a path is called a \emph{cycle} if $x_0 = x_N$, and it is said \emph{$F$-ascending}\footnote{Or just ascending when there is no confusion.} if for every $0 \leq i< N$ there exists $p_i \in F(x_i)$ such that $(x_{i+1}-x_i)\cdot p_i > 0$.
\end{dfn}

\subsection{Cyclically quasi-monotone maps and quasi-convex potentials}

Let us reformulate the definition of cyclic quasi-monotonicity, given in the introduction, with the notion of $F$-ascending cycle. 

\begin{dfn}[Cyclically quasi-monotone maps]
A multi-valued map $F :  \R^d \multimap \R^d$ with full domain is \emph{cyclically quasi-monotone} if there is no $F$-ascending cycle. In other words, for every cycle $x_0, \ldots, x_N  \in \R^d$ (where $x_N = x_0$), and every $p_0 \in F(x_0),\ldots, p_{N-1} \in F(x_{N-1})$ it holds:
\[\min_{0\leq i < N} (x_{i+1}-x_i)\cdot p_i \leq 0.\]
\end{dfn}

\begin{rmk}
If $F :  \R^d \multimap \R^d$ is cyclically quasi-monotone, then so are:
\begin{itemize}
\item its closure, $\cl F$, defined as its minimal closed extension (as a graph);
\item its convex cone envelope, $\cc F$, defined as its minimal extension whose image at every point is a convex cone, or equivalently
\[(\cc F)(x) = \Bigl\{\sum_{0\leq i < N} \lambda_i p_i : \forall i, \lambda_i \geq 0 \text{ and } p_i \in F(x)\Bigr\}\quad (\forall x);\]
\item its closed convex cone envelope, $\clcc F$, defined as the minimal extension which is closed and whose image at every point is a (necessarily closed) convex cone;
\item $\one_M(x) F(x)$ where $\one_M$ is the characteristic function (value in $\{0,1\}$) of a subset $M\subset \R^d$.
\end{itemize}
\end{rmk}

As said in the introduction, it turns out that cyclically quasi-monotone maps are closely related \emph{quasi-convex} functions, whose definition we now recall.

\begin{dfn}[Quasi-convex function and normal cone multi-map]\label{def_quasi-convex_function}
A function $f : \R^d \to \R\cup\{+\infty\}$ is \emph{quasi-convex} if its sub-level sets $\{x : f(x) \leq \ell\}$ are convex for every $\ell \in \R$. The \emph{normal cone multi-map} $\nop f :  \R^d \multimap \R^d$ of such a function $f$ is defined by
\begin{equation*}
    \forall x\in \R^d, \qquad \nop f(x) = N_{C_f(x)}(x)
\end{equation*}
where $N_C(x) \coloneqq \partial \chi_C(x)$ is the normal cone to the convex set $C$ at $x$, and $C_f(x) \coloneqq \{w\in\R^d : f(w) \leq f(x)\}$. Equivalently, for every $x\in \R^d$,
\begin{equation}
    \nop f(x) = \{p\in \R^d : \forall w, f(w) \leq f(x) \implies p \cdot(w-x) \leq 0\}.\label{normal_cone_def2}
\end{equation}
\end{dfn}

In the following proposition, we show that quasi-convex functions induce cyclically quasi-monotone operators through their normal cone operator. 

\begin{prp}\label{normal_is_qm} Let $F : \R^d \multimap \R^d$ be a multi-map included in the normal cone operator of some quasi-convex function $f : \R^d \to \R\cup\{+\infty\}$, namely 
\[\forall x\in \R^d, \quad F(x) \subseteq \nop f (x).\]
Then $F$ is cyclically quasi-monotone.
\end{prp}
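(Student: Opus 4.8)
The plan is to prove the contrapositive formulation directly, namely that $F$ admits no $F$-ascending cycle. Take a cycle $x_0,\ldots,x_N$ with $x_N = x_0$ and choose arbitrary $p_i \in F(x_i) \subset \nop f(x_i)$ for $0 \le i < N$; I want to show that $\min_{0\le i<N}(x_{i+1}-x_i)\cdot p_i \le 0$, so I argue by contradiction and assume $(x_{i+1}-x_i)\cdot p_i > 0$ for every $i$.

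The only real ingredient is the dual description \eqref{normal_cone_def2} of $\nop f$. Since $p_i \in \nop f(x_i)$, we have $f(w) \le f(x_i) \implies p_i\cdot(w-x_i)\le 0$ for all $w$; equivalently, by contraposition, $p_i\cdot(w-x_i) > 0 \implies f(w) > f(x_i)$. Applying this with $w = x_{i+1}$ and using the standing assumption $(x_{i+1}-x_i)\cdot p_i>0$ gives $f(x_{i+1}) > f(x_i)$ for each $0\le i<N$.

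Chaining these inequalities along the cycle produces the strictly increasing chain $f(x_0) < f(x_1) < \cdots < f(x_N)$ in the totally ordered set $\R\cup\{+\infty\}$, hence $f(x_0) < f(x_N)$; but $x_N = x_0$, so $f(x_0) < f(x_0)$, which is absurd (this remains valid whether the values are finite or equal to $+\infty$, since we only invoke transitivity and strictness of $<$). Therefore $\min_{0\le i<N}(x_{i+1}-x_i)\cdot p_i \le 0$, which is exactly cyclic quasi-monotonicity of $F$. I do not expect any genuine obstacle here: quasi-convexity of $f$ is not even needed beyond the identity \eqref{normal_cone_def2} already recorded in \Cref{def_quasi-convex_function}; the one point worth stating carefully is simply that the cyclic contradiction goes through in $\R\cup\{+\infty\}$ with no finiteness assumption on $f$.
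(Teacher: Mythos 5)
Your argument is correct and is essentially the same as the paper's: both use the dual description \labelcref{normal_cone_def2} of $\nop f$ to turn each positive increment $(x_{i+1}-x_i)\cdot p_i>0$ into the strict inequality $f(x_{i+1})>f(x_i)$, and then chain along the cycle to reach $f(x_0)<f(x_0)$ (the paper phrases this as showing no $\nop f$-ascending cycle can exist rather than as a contradiction, which is only a cosmetic difference). Your remark that quasi-convexity enters only through the definition of $\nop f$ and that the chaining works in $\R\cup\{+\infty\}$ matches the paper's reasoning.
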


\begin{proof}
Let us consider $(x_0,p_0),\ldots (x_N,p_N)$ an $\nop f$-ascending path. Let $0 \leq i < N$. Since $p_i \cdot (x_{i+1}-x_i) > 0$ and $p_i \in \nop f(x_i)$, by \labelcref{normal_cone_def2} we have:
\[f(x_{i+1}) > f(x_i).\]
As a consequence $f(x_N) > f(x_0)$ thus $x_0 \neq x_N$ and there is no $\nop f$-ascending cycle.
\end{proof}
This result holds true for any \emph{general} function $f$ but the assumption of quasi-convexity guarantees that $\nop f(x)$ is never empty.

Our goal is to tackle \Cref{main_problem}, i.e. to show that the normal cone multi-maps associated with quasi-convex functions are, in some sense, the \emph{only} cyclically quasi-monotone multi-maps, at least under some assumptions. As said in the introduction, let us explain that we can restrict ourselves to lower semi-continuous potentials that only take finite values.

\begin{rmk}[Finiteness and lower semi-continuity of potentials]\label{finite_lsc_potentials}
First of all, by composing $f$ with a strictly increasing map from $\R\cup\{+\infty\}$ to $\R$ (for example $\arctan (t)$), we can assume $f$ to be valued in $\R$.

Moreover, notice that in \Cref{def_quasi-convex_function}, $C_f(x)$ may be replaced with its closure because
\[p \in \partial\chi_{C_f(x)}(x) \iff p \in \partial \chi_{\overline{C_f(x)}}(x),\]
thus one is tempted to look for potentials $f$ which are lower semi-continuous, ensuring that $C_f(x)$ is closed for every $x \in \R^d$. Indeed, this will be possible, since by \Cref{existence_potentials} for any given quasi-convex function $f$ we can find a lower semi-continuous function $\tilde f$ such that for every $x\in \R^d$, $C_{\tilde f}(x) = \overline{C_f(x)}$, which implies that $\nop f(x) = \nop{\tilde f}(x)$.
\end{rmk}

\subsection{Potentials, preference relations and set families}

Before tackling \Cref{main_problem}, i.e. the question of existence of a (finite and lower semi-continuous) quasi-convex potential $f$, what about uniqueness? Notice that if there is a solution $f$, then $\phi \circ f$ is also a potential of $F$ whenever $\phi : \R \to \R$ is a strictly increasing function, thus there is no uniqueness. What matters is actually the binary relation $\preceq_f$ induced by $f$, defined by
\[x\preceq_f y \iff f(x) \leq f(x),\]
which remains unchanged after composition with $\phi$. It is a quasi-convex and lower semi-continuous \emph{preference relation}, a notion which we shall define just after recalling some useful vocabulary on binary relations.

\begin{dfn}[Vocabulary on binary relations]\label{vocabulary_binary_relations}
A binary relation $\mathcal R$ on a set $X$ is
\begin{itemize}
    \item \emph{reflexive} if $x \genrel x$ is true for every $x\in X$,
    \item \emph{irreflexive} if $x\genrel x$ is false for every $x\in X$,
    \item \emph{asymmetric} if $x\genrel y$ and $y \genrel x$ cannot be simultaneously true, for every $x,y\in X$,
    \item \emph{antisymmetric} if $x\genrel y$ and $y \genrel x$ implies $x=y$, for every $x,y\in X$,
    \item \emph{transitive} if $x\genrel y$ and $y\genrel z$ implies $x\genrel z$ for every $x,y,z\in X$,
    \item \emph{total} (or \emph{connected}) if either $x\genrel y$ or $y\genrel x$ is true, for every $x,y\in X$.
\end{itemize}
\end{dfn}

\begin{dfn}[Pre-order and preference relation]
A binary relation $\preceq$ is a \emph{pre-order} if it is reflexive and transitive, and it is a \emph{preference relation} if it is a total pre-order. The pre-order $\preceq$ is said
\begin{itemize}
\item \emph{lower semi-continuous} if for every $x$, $\{w : w \preceq x\}$ is closed,
\item \emph{quasi-convex} if for every $x$, $\{w : w \preceq x\}$ is convex.
\end{itemize}
\end{dfn}

All the properties of the preference relation $\mathalpha{\preceq}$ can be equivalently expressed in terms of the family (indexed by $\R^d$) of subsets
 \[\mathcal C_{\mathalpha{\preceq}} \coloneqq (C(x))_{x\in \R^d}\quad\text{where}\quad C(x) = \{w : w \preceq x\}\]
 for every $x\in \R^d$. In order to build a quasi-convex potential for $F$ (up to strictly increasing composition), we may therefore try to build three different types of objects: (quotiented) potentials, preference relations or set families,  as defined below.
 
 \begin{dfn}[Spaces of quotiented potentials, preference relations and set families]
 We denote:
 \begin{itemize}
\item[$\mathsf{F}_d$] the set of quasi-convex lsc functions $f : \R^d \to \R$ quotiented by the relation $\sim$ defined by $f \sim g \iff f = \phi \circ g$ for some strictly increasing function $\phi : g(\R^d) \to \R$;
\item[$\mathsf{P}_d$] the set  of lower semi-continuous quasi-convex preference relations over $\R^d$;
\item[$\mathsf{C}_d$] the set of families $\mathcal C = (C(x))_{x\in \R^d}$ of totally ordered\footnote{For the inclusion.} closed convex subsets of $\R^d$ which satisfy $x\in C(x)$ and ($w\in C(x) \implies C(w)\subseteq C(x)$) for every $x,w\in \R^d$.
\end{itemize}
\end{dfn}

We shall prove that these three sets are in bijection, through the canonical maps:
\begin{equation}\label{def_bijections}
    \fundef{\Phi}{\mathsf{F}_d}{\mathsf{P}_d}{{f}/{\mathalpha{\sim}}}{\mathalpha{\preceq_f}} \quad\text{and}\quad \fundef{\Psi}{\mathsf{P}_d}{\mathsf{C}_d}{\preceq}{\mathcal C_{\preceq} = (\{w : w \preceq x\})_{x\in \R^d}},
\end{equation}
the map $\Phi$ being well-defined because two equivalent functions produce the same preference relation, as already noticed.

The only difficulty in showing that they are bijective is to invert $\Phi$, more precisely to build, starting from a quasi-convex lower semi-continuous relation $\mathalpha{\preceq}$, a lower semi-continuous quasi-convex function $f$ that satisfies $\mathalpha{\preceq} = \mathalpha{\preceq_f}$. To obtain a lower semi-continuous function $f$, we shall use the following lemma, providing a \enquote{sub-level-stable} lower semi-continuous envelope (sublsc envelope for short).

\begin{lem}[Sublevel-stable lower semi-continuous envelope]\label{sublsc_envelope}
Let $g : \R^d \to \R$. If $\{g\leq g(x)\}$ is closed for every 
$x\in \R^d$, then the function $\hat g : \R^d \to \R$ defined by
\begin{equation}\label{def_function_from_set_family}
    \hat g(x) = \inf \left\{ \liminf_{n\to +\infty} g(x_n') : x_n' \to x'\text{ and } g(x') = g(x)\right\} \quad (\forall x\in \R^d)
\end{equation}
is lower semi-continuous and satisfies
\begin{equation}\label{prop_function_from_set_family}
\{g\leq g(x)\} = \{\hat g\leq \hat g(x)\}\quad(\forall x\in \R^d).
\end{equation}
\end{lem}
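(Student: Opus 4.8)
The plan is to verify two things: first that $\hat g$ defined by \labelcref{def_function_from_set_family} is lower semi-continuous, and second that it has exactly the same sublevel sets as $g$, in the strong pointwise sense of \labelcref{prop_function_from_set_family}. A cleaner way to think about the definition is the following: for $x\in\R^d$ let $C(x) \coloneqq \{g \leq g(x)\}$, which by hypothesis is closed. The family $(C(x))_x$ is totally ordered by inclusion (since $g(x')\leq g(x)$ or $g(x)\leq g(x')$ for any $x,x'$), nested (if $w\in C(x)$ then $C(w)\subseteq C(x)$), and contains $x$. One checks immediately that $\hat g(x) = \inf\{\liminf_n g(x_n') : x_n'\to x',\ g(x')=g(x)\} = \inf\{\liminf_n g(x_n') : x_n'\to x',\ C(x')=C(x)\}$, and in fact the relevant quantity only depends on the set $C(x)$: writing $\hat g$ in terms of the sets, for a point $x$ lying on the ``boundary stratum'' $\partial C(x)$ inside the level $\{g = g(x)\}$, $\hat g(x)$ records whether $x$ is a limit of points from strictly lower levels. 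So I would first record the reformulation $\hat g(x) = \inf\{\liminf_n g(x_n') : x_n' \to x \text{ (one may take } x'=x\text{)}\} \wedge g(x)$ — more precisely $\hat g(x) = \min\big(g(x),\ \inf\{\liminf_n g(x_n') : x_n'\to x\}\big)$, once one observes that replacing $x$ by another $x'$ with $g(x')=g(x)$ does not change the infimum since one can always sit still at $x$ to get $g(x)$, and the infimum over approximating sequences of $x'$ is the same as over approximating sequences of $x$ when $g(x')=g(x)$. Actually the safest route is to keep the definition as given and argue directly.

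For lower semi-continuity: fix $x$ and a sequence $x_k \to x$; I must show $\liminf_k \hat g(x_k) \geq \hat g(x)$. Using the definition of $\hat g(x_k)$, pick for each $k$ an admissible configuration: a point $y_k$ with $g(y_k) = g(x_k)$ and a sequence $z^k_n \to y_k$ with $\liminf_n g(z^k_n) \leq \hat g(x_k) + 1/k$. The key point is that $g(y_k) = g(x_k)$ and $x_k \to x$, so $C(x_k) = C(y_k)$, hence $y_k \in C(y_k) = C(x_k)$, i.e. $g(y_k) \leq g(x_k)$ trivially (equality), and $x_k \in C(x_k)$; I want to extract from $\{z^k_n\}$ a single diagonal sequence converging to $x$ with small $\liminf$. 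Since $z^k_n \to y_k$ as $n\to\infty$ and $x_k \to x$ but $y_k$ need not converge to $x$, I should instead compare levels: $C(x_k)\ni x_k \to x$ and $\{g\le g(x)\} = C(x)$ is closed, so... hmm, this does not immediately give $g(x_k)\le g(x)$. The correct handling: if $\liminf_k g(x_k) \geq g(x)$ there is still content; the subtlety is exactly that $g$ itself is not lsc. Let me restructure: to prove $\liminf_k\hat g(x_k)\ge \hat g(x)$, suppose it fails, so along a subsequence $\hat g(x_k) \to \ell < \hat g(x)$. For each such $k$ choose $y_k, (z^k_n)_n$ as above with $\limsup_k \liminf_n g(z^k_n) \le \ell$. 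For each $k$ choose $n_k$ with $|z^k_{n_k} - y_k| < 1/k$ and $g(z^k_{n_k}) \le \liminf_n g(z^k_n) + 1/k$. Then $g(z^k_{n_k}) \to$ value $\le \ell$. Now I need $z^k_{n_k} \to x$: we have $z^k_{n_k}$ close to $y_k$, and $y_k$ close to $x_k$ (since $g(y_k)=g(x_k)$ forces $y_k\in\partial C(x_k)$, but that's not proximity). This is the crux: $y_k$ is NOT close to $x_k$ in general. So I must use $x_k$ itself as the approximating point. Better: in the definition of $\hat g(x_k)$, one admissible choice of sequence approximating $x'=x_k$ itself (valid since $g(x_k)=g(x_k)$) is the constant sequence, giving the bound $\hat g(x_k)\le g(x_k)$; but more usefully, any sequence $w_n\to x_k$ gives $\hat g(x_k)\le \liminf_n g(w_n)$. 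So I would argue: $\hat g(x_k) \le \liminf_n g(w^k_n)$ for every $w^k_n\to x_k$; in particular, running $k$ and diagonalizing over sequences $w^k_n \to x_k$ that witness $\hat g(x)$ — namely take the witnessing configuration $(x', (x'_n))$ for $\hat g(x)$ with $g(x')=g(x)$ and $x'_n\to x'$, $\liminf_n g(x'_n) \le \hat g(x)+\eps$. I want these points $x'_n$ to also witness $\hat g(x_k)$ for large $k$, but they approximate $x'$, not $x_k$. The honest resolution: I expect the intended proof shows $\hat g$ is the \emph{largest} lsc minorant of $g$ among functions sharing the sublevels, or directly that $\hat g(x)=\inf\{\liminf g(x_n): x_n\to x\}$ when restricted appropriately; I would follow whichever identity makes lsc a one-line consequence of lsc of a pointwise $\inf$ of $\liminf$'s.

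For the sublevel identity \labelcref{prop_function_from_set_family}: I would show $\hat g \le g$ pointwise (taking the constant sequence $x'_n = x$, $x'=x$), hence $\{g \le g(x)\} \subseteq \{\hat g \le g(x)\}$, and combine with $\hat g(x) \le g(x)$ to get $\{g\le g(x)\}\subseteq\{\hat g\le\hat g(x)\}$ once I also check $\hat g(w)\le \hat g(x)$ whenever $g(w)\le g(x)$ — which follows because the admissible configurations for $w$ (sequences $x'_n\to x'$ with $g(x')=g(w)$) can be matched to configurations with value no larger, using that lower levels of $g$ sit inside $C(x)$; more carefully, $g(w)\le g(x)$ and if $g(w)<g(x)$ then any near-optimal sequence for $\hat g(w)$ has $\liminf$ close to $\hat g(w)\le g(w)<g(x)$, giving $\hat g(w)< g(x)$ hence $\le\hat g(x)$ only if $\hat g(x)\ge g(w)$... this needs $\hat g$ to be monotone along $g$-levels, which I'd prove from the nestedness of $(C(x))$. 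For the reverse inclusion $\{\hat g\le\hat g(x)\}\subseteq\{g\le g(x)\}$: if $\hat g(w)\le\hat g(x)$ I must deduce $g(w)\le g(x)$, equivalently $w\in C(x)$; suppose not, so $g(w)>g(x)$, i.e. $C(x)\subsetneq C(w)$ and $x\in C(w)\setminus C(x)$. I then need $\hat g(w)>\hat g(x)$, i.e. $w$ cannot be approached (in the relevant sense) by points of level $\le\hat g(x)<g(w)$. The main obstacle is precisely this direction — ruling out that a point $w$ on a high level is a limit of low-level points while $\hat g(x)$ on an intermediate level is itself a limit of even lower-level points — and I expect it to be handled by a careful case analysis using closedness of all sublevel sets $\{g\le g(\cdot)\}$ together with the total order and nestedness of the family $(C(x))_{x}$, exactly the properties packaged in the hypothesis. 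I would present lsc as the easy part and this last inclusion as the delicate one.
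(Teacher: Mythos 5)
Your proposal is a plan rather than a proof, and the steps it leaves open are exactly the ones carrying all the difficulty. First, the ``cleaner reformulation'' you float at the start, $\hat g(x) = \min\bigl(g(x),\, \inf\{\liminf_n g(x_n') : x_n' \to x\}\bigr)$, is false: the infimum in \labelcref{def_function_from_set_family} runs over \emph{all} points $x'$ of the level $\{g = g(x)\}$, and the lsc envelope $\bar g$ can take different values at different points of the same level, so that $\hat g(x) = \inf\{\bar g(x') : g(x') = g(x)\}$ is in general strictly smaller than $\bar g(x)$. (Take $d=1$, $g(x)=x$ for $x<0$, $g\equiv 5$ on $[0,1]$, $g(x)=x+5$ for $x>1$: all sets $\{g\le g(x)\}$ are closed, yet $\bar g(0)=0\ne 5=\bar g(1)$, and your formula would give $\hat g(1)=5$ instead of $0$; with that function the sublevel identity \labelcref{prop_function_from_set_family} fails at $x=0$.) If your identity held, $\hat g$ would simply be $\bar g$, and the remark following \Cref{equivalence_potentials_relations_families} points out that $\bar g$ does \emph{not} preserve the indexed sublevel families --- which is the entire reason $\hat g$ is introduced. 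You do retract this, but you later rely on the same hope when you say you would make lower semicontinuity ``a one-line consequence'' of some such identity: no such shortcut exists (an infimum of liminfs taken over a level set is not automatically lsc), and your diagonal attempt correctly identifies why the naive argument breaks (the witnesses $y_k$ need not be near $x_k$) but then stops without resolving it.

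Second, the reverse inclusion $\{\hat g \le \hat g(x)\} \subseteq \{g \le g(x)\}$ is only ``expected to be handled by a careful case analysis''; it is never carried out, and your sketch of the forward monotonicity $g(w)\le g(x) \implies \hat g(w)\le\hat g(x)$ also trails off. The missing engine is the elementary but crucial implication $g(x) < g(y) \implies g(x) \le \hat g(y)$, proved from closedness of $\{g\le g(x)\}$ (any sequence approximating a point of the level of $y$ eventually leaves that set). From it, together with $\hat g \le g$ and the constancy of $\hat g$ on $g$-levels, one gets the forward monotonicity; the converse implication is then obtained by contradiction, splitting according to whether some intermediate value $g(y) < g(z) < g(x)$ exists; and the same implication, combined with extracting a monotone subsequence of $(\hat g(x_k))$ and using closedness of the sublevels to exclude the nonincreasing case, is what makes the lower semicontinuity argument go through. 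As it stands, your proposal establishes neither conclusion of the lemma.
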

\begin{proof}
It is immediate to see that
\begin{equation}\label{hat-is-smaller}
    \hat g \leq g
\end{equation}
by taking $x' = x$ and $x'_n = x$ for every $n$ in the definition of $\hat g(x)$. Denoting by $\bar g$ the lower semi-continuous envelope of $g$, \labelcref{def_function_from_set_family} may be rewritten as
\begin{equation}\label{f_characterization_lsc_envelope}
    \hat g(x) 
    = \inf \{ \bar g(x') : g(x) = g(x')\}
\end{equation}
and thus
\begin{equation}\label{f_constant_on_equivalent_points}
    \hat g(x) = \hat g(x') \quad\text{whenever}\quad g(x) = g(x').
\end{equation}

We first prove \labelcref{prop_function_from_set_family}. We shall successively prove the following, for every $x,y\in\R^d$:
\begin{gather}
    \label{f_increasing_step1}
    g(x) < g(y) \implies g(x) \leq \hat g(y)\\
    \label{f_nondecreasing}
    g(x) \leq g(y) \implies \hat g(x) \leq \hat g(y)\\
    \label{f_increasing}
    \hat g(x) \leq \hat g(y) \implies g(x) \leq g(y).
\end{gather}
Obviously \labelcref{f_nondecreasing} and \labelcref{f_increasing} yield \labelcref{prop_function_from_set_family}, while \labelcref{hat-is-smaller}, \labelcref{f_constant_on_equivalent_points} and \labelcref{f_increasing_step1} imply \labelcref{f_nondecreasing}. To alleviate notation, we shall denote $C(x) = \{g \leq g(x)\}$ for every $x \in \R^d$.

Let us show \labelcref{f_increasing_step1}. Consider $x,y\in \R^d$ such that $g(x) < g(y)$. Take a point $y'$ competitor in the definition of $\hat g(y)$, so such that $g(y') = g(y)$ and consider a sequence $(y'_n)$ converging to $y'$. We know that $y' \not\in C(x)$ otherwise we would have $g(y) = g(y') \leq g(x)$: a contradiction of $g(x) < g(y)$.  Since $C(x)$ is closed, $y'_n \not \in C(x)$ for $n$ large enough, thus $g(y'_n) > g(x)$, and taking the inferior limit yields:
\[\liminf_{n\to+\infty} g(y'_n) \geq g(x).\]
This holds for any $y'_n \to y'$ such that $g(y') = g(y)$, hence we get $\hat g(y) \geq g(x)$, and \labelcref{f_increasing_step1} is proved. 

Let us show \labelcref{f_increasing} by contradiction, assuming that there exists $x,y$ such that $\hat g(x) \leq \hat g(y)$ but $g(x) > g(y)$. By \labelcref{hat-is-smaller} and \labelcref{f_increasing_step1} we have
\begin{equation}\label{f_increasing_step3}
    \hat g(y) \leq g(y) \leq \hat g(x) \implies \hat g(x) = \hat g(y) = g(y) < g(x).
\end{equation}
We distinguish two cases.
\paragraph{Case 1: there exists $z$ such that $g(y) < g(z) < g(x)$.} In that case $g(z) \leq \hat g(x)$ by \labelcref{f_increasing_step1}, hence
\[\hat g(y) = g(y) < g(z) \leq \hat g(x),\]
a contradiction with $\hat g(y) = \hat g(x)$ in \labelcref{f_increasing_step3}.

\paragraph{Case 2: $\forall z, g(y) \leq g(z) \leq g(x) \implies g(z) = g(y) \text{ or } g(z) = g(x)$.} Consider $x'_n \to x'$ such that $g(x') = g(x)$ as candidate in the definition of $\hat g(x)$. We may assume without loss of generality that $g(x'_n) \leq g(x')$ (otherwise we replace $x'_n$ with $x'$ for all indices $n$ such that $g(x'_n) > g(x')$). Since $g(y) < g(x) = g(x')$, necessarily $x' \not \in C(y)$ thus $x'_n \not\in C(y)$ for $n$ large enough because $C(y)$ is closed, which yields
\[g(x) = g(x') \geq g(x'_n) > g(y).\]
In the present case, it implies that $g(x'_n)$ is equal either to $g(y)$ or to $g(x)$, thus necessarily to $g(x)$: $g(x'_n) = g(x)$ for every $n$ large enough. This is true for every candidate sequence $(x'_n)$ in the definition of $\hat g(x)$ thus $\hat g(x)=g(x)$: a contradiction with \labelcref{f_increasing_step3}.

In both cases we have found a contradiction, hence \labelcref{f_increasing} holds true, which concludes the proof of \labelcref{prop_function_from_set_family}.

Finally, we show that $\hat g$ is lower semi-continuous. Let $(x_k)_{k\in\N}$ be a sequence converging to some $x$, and up to taking a subsequence, assume that $\liminf_{k\to+\infty} \hat g(x_k) = \lim_{k\to+\infty} \hat g(x_k)$. We want to show that $\hat g(x) \leq \lim_{k\to+\infty} \hat g(x_k)$. Notice that, thanks to \labelcref{prop_function_from_set_family}, $\hat g(x) \leq \hat g(x_k)$ whenever $g(x) \leq g(x_k)$, thus we may without loss of generality remove all the corresponding indices $k$ from the sequence $(x_k)_{k\in\N}$ (assuming there remains infinitely many), so that for every $k$,
\begin{equation}\label{hat_g_strict_for_lsc}
    \hat g(x_k) < \hat g(x), \quad\text{or equivalently,} \quad g(x_k) < g(x).
\end{equation}
Moreover, one can extract a subsequence such that the real sequence $(\hat g(x_k))_{k\in\N}$ is monotone, and $(g(x_k))_{k\in\N}$ has the same monotonicity. Let us justify that $(\hat g(x_k))_{k\in \N}$ must be nondecreasing and eventually increasing. If not, it would eventually be nonincreasing. Thus for large $k$, $g(x_k) \leq g(x_N)$ for some fixed $N$, i.e. $x_k\in C(x_N)$, and since $C(x_N)$ is closed, $x=\lim_{k\to+\infty} x_k \in C(x_N)$, which means that $g(x) \leq g(x_N)$: a contradiction with \labelcref{hat_g_strict_for_lsc}. Thus, up to subsequence $(\hat g(x_k))_{k\in\N}$ is increasing. For every $k$, since $g(x_k) < g(x_{k+1})$ we know by \labelcref{f_increasing_step1} that $g(x_k) \leq \hat g(x_{k+1})$, thus
\[\liminf_{k\to +\infty} \hat g(x_k) = \liminf_{k\to+\infty} \hat g(x_{k+1}) \geq \liminf_{k\to+\infty} g(x_k) \geq \liminf_{k\to+\infty} \bar g(x_k) \geq \bar g(x) \geq \hat g(x),\]
where we have used \labelcref{f_characterization_lsc_envelope} in the last inequality.
\end{proof}

\begin{prp}[$\mathsf{F}_d \simeq \mathsf{P}_d \simeq \mathsf{C}_d$]\label{equivalence_potentials_relations_families}
The functions $\Phi : \mathsf{F}_d \to \mathsf{P}_d$ and $\Psi : \mathsf{P}_d \to \mathsf{C}_d$ defined in \labelcref{def_bijections} are bijective.

Moreover, for every $\mathcal C = (C(x))_{x\in \R^d} \in \mathsf{C}_d$,
\begin{equation}\label{Psi_inverse}
    \Psi^{-1}(\mathcal C) \text{ is the relation $\mathalpha{\preceq}$ defined by } (w\preceq x \iff w \in C(x)),
\end{equation}
and
\begin{equation}\label{PsiPhi_inverse}
    (\Psi \circ \Phi)^{-1}(\mathcal C) = \hat g/\mathalpha{\sim},
\end{equation}
where $\hat g$ is, for instance, the sublsc envelope (defined in \labelcref{def_function_from_set_family} of \Cref{sublsc_envelope}) of the real-valued quasi-convex function $g$ given by
\begin{equation}\label{pre_def_function_from_set_family}
g(x) = \dim C(x)  + \frac 1{(2\pi)^{\dim C(x)/2}} \int_{C(x)} e^{-\abs{w}^2/2} \dd \hdm^{\dim C(x)}(w),
\end{equation}
for every $x\in \R^d$. Here $\hdm^k$ denotes the $k$-dimensional Hausdorff measure.
\end{prp}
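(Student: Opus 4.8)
The plan is to establish the two bijections separately: $\Psi$ amounts to a dictionary between preference relations and set families and should be routine, whereas $\Phi$ requires actually building a potential, with the construction \labelcref{pre_def_function_from_set_family} doing the work and \Cref{sublsc_envelope} taking care of lower semi-continuity.

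First I would treat $\Psi$. To see it is well-defined, i.e.\ $\mathcal C_\preceq\in\mathsf C_d$ for $\preceq\in\mathsf P_d$: the sets $C(x)=\{w:w\preceq x\}$ are closed by lower semi-continuity of $\preceq$ and convex by its quasi-convexity; reflexivity gives $x\in C(x)$; transitivity gives $w\in C(x)\Rightarrow C(w)\subseteq C(x)$; and totality, together with $x\in C(x)$ and $y\in C(y)$, forces $C(x)$ and $C(y)$ to be nested. Injectivity is immediate, since $\preceq$ is recovered from $(C(x))_x$ by $w\preceq x\iff w\in C(x)$. For surjectivity I would take $\mathcal C\in\mathsf C_d$, define $\preceq$ by that same formula, and check it lies in $\mathsf P_d$: reflexivity, closedness and convexity of the sublevel sets are built into the definition of $\mathsf C_d$; transitivity is exactly the nestedness hypothesis; and totality follows from the total order on $(C(x))_x$ combined with $x\in C(x)$ — if $C(x)\subseteq C(y)$ then $x\in C(y)$, i.e.\ $x\preceq y$. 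This proves $\Psi$ bijective and yields \labelcref{Psi_inverse}.

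Next, $\Phi$. Well-definedness (two $\sim$-equivalent functions induce the same relation) and the fact that $\preceq_f\in\mathsf P_d$ are straightforward. For injectivity: if $\preceq_f=\preceq_g$ then $g(x)=g(y)\iff f(x)=f(y)$ and $g(x)<g(y)\iff f(x)<f(y)$, so $\phi(g(x))\coloneqq f(x)$ defines a strictly increasing $\phi$ on $g(\R^d)$ with $f=\phi\circ g$, i.e.\ $f\sim g$. The substantive part is surjectivity. Fix $\preceq\in\mathsf P_d$, let $\mathcal C=(C(x))_x=\Psi(\preceq)$ (each $C(x)$ is nonempty since $x\in C(x)$, so $\dim C(x)$, the dimension of its affine hull, is well-defined), and let $g$ be given by \labelcref{pre_def_function_from_set_family}. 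I would prove the key claim that $g(x)\le g(y)\iff C(x)\subseteq C(y)$ for all $x,y$. For this, note that for a closed convex $C$ of dimension $k$, writing $w_0$ for the orthogonal projection of the origin onto the affine hull of $C$, the second summand of \labelcref{pre_def_function_from_set_family} satisfies $\frac1{(2\pi)^{k/2}}\int_C e^{-\abs{w}^2/2}\dd\hdm^k(w)\le e^{-\abs{w_0}^2/2}\le 1$ and is $>0$; hence if $\dim C(x)<\dim C(y)$ then $g(x)\le\dim C(x)+1\le\dim C(y)<g(y)$, while if $\dim C(x)=\dim C(y)=k$ and $C(x)\subseteq C(y)$ the two sets share the same $k$-dimensional affine hull, so $C(x)=C(y)$ gives $g(x)=g(y)$ whereas $C(x)\subsetneq C(y)$ gives $\hdm^k(C(y)\setminus C(x))>0$ (that set being nonempty and relatively open in the $k$-dimensional convex set $C(y)$) and thus $g(x)<g(y)$. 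So $C(x)\subseteq C(y)\Rightarrow g(x)\le g(y)$ with equality iff $C(x)=C(y)$; since $(C(x))_x$ is totally ordered, $C(x)\not\subseteq C(y)$ forces $C(y)\subsetneq C(x)$, hence $g(y)<g(x)$, which proves the claim. It follows that $\{w:g(w)\le g(x)\}=\{w:C(w)\subseteq C(x)\}=C(x)$ (using $w\in C(w)$ for one inclusion and the nestedness property of $\mathsf C_d$ for the other), so $g$ is real-valued (in fact $0<g\le d+1$), quasi-convex, and has closed sublevel sets. Then \Cref{sublsc_envelope} produces a lower semi-continuous $\hat g$ with $\{\hat g\le\hat g(x)\}=\{g\le g(x)\}=C(x)$ for all $x$; this $\hat g$ is real-valued (bounded, by \labelcref{hat-is-smaller} and \labelcref{f_characterization_lsc_envelope}, between $0$ and $d+1$) and quasi-convex, and $w\preceq_{\hat g}x\iff \hat g(w)\le\hat g(x)\iff w\in C(x)\iff w\preceq x$, so $\Phi(\hat g/\mathalpha{\sim})=\preceq$. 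This gives surjectivity of $\Phi$, and chasing the two inverses yields \labelcref{PsiPhi_inverse}: $(\Psi\circ\Phi)^{-1}(\mathcal C)=\Phi^{-1}(\Psi^{-1}(\mathcal C))=\hat g/\mathalpha{\sim}$.

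The only genuinely delicate step is the key claim: the bookkeeping of the dimension term together with the strict monotonicity of the Gaussian-weighted Hausdorff measure under proper inclusion of convex sets of equal dimension. Everything else is a dictionary translation between potentials, preference relations and set families, or a direct application of \Cref{sublsc_envelope}.
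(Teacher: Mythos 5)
Your proof is correct and follows essentially the same route as the paper: the same Gaussian-plus-dimension function $g$ from \labelcref{pre_def_function_from_set_family}, the same strict-monotonicity-with-respect-to-$\mathcal C$ claim (dimension bookkeeping plus strict growth of the normalized Gaussian integral under proper inclusion at equal dimension), and the same appeal to \Cref{sublsc_envelope} to pass from $g$ to a lower semi-continuous $\hat g$ with $\{\hat g\le\hat g(x)\}=C(x)$. The only differences are cosmetic: you spell out the bijectivity of $\Psi$ (which the paper treats as immediate) and prove injectivity of $\Phi$ directly on preference relations rather than on sublevel families as the paper does for $\Psi\circ\Phi$.
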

\begin{rmk}
We shall see that the function $g$ defined in \labelcref{pre_def_function_from_set_family} satisfies $C(x) = \{ g \leq g(x)\}$ for every $x\in \R^d$, which implies that $g$ is quasi-convex, but it is not necessarily lsc. Indeed, although all sub-level sets of the form $\{ g \leq g(x)\}$ are closed, it may happen that for some levels $\ell \not \in g(\R^d)$, $\{g\leq \ell\}$ is not closed. Replacing $g$ with its lsc envelope $\bar g$ would not work since in general $\{ g \leq g(x)\} \neq \{ \bar g \leq \bar g(x)\}$. This is why we introduced the sub-level-stable lsc envelope $\hat g$ in \Cref{sublsc_envelope}.
\end{rmk}
\begin{proof}
First of all, it is straightforward to check that $\Psi$ is bijective with inverse given by \labelcref{Psi_inverse}, thus it suffices to show that $\Theta \coloneqq \Psi \circ \Phi$ is bijective with inverse given by \labelcref{PsiPhi_inverse}.

Let $\mathcal C = (C(x))_{x\in \R^d} \in \mathsf{C}_d$ and $\mathalpha{\preceq} \coloneqq \Psi^{-1}(\mathcal C) \in \mathsf{P}_d$ the associated preference relation. Let us show that the function $g$ defined in \labelcref{pre_def_function_from_set_family} satisfies
\begin{equation}\label{g_sub-level_sets}
    C(x) = \{g\leq g(x)\}:=C_g(x)\quad(\forall x\in \R^d).
\end{equation}
This will follow from the fact that $g$ is strictly increasing w.r.t. the family $\mathcal C$, in the sense that $g(x') \leq g(x)$ whenever $C(x') \subseteq C(x)$ (this part is obvious from the expression of $g$) and that $g(x') = g(x) \implies C(x') = C(x)$.

Let us prove the latter implication. Assume that $g(x') = g(x)$, and since $\mathcal C$ is totally ordered we may assume for example that $C(x') \subseteq C(x)$. If $\dim C(x') < \dim C(x)$ then
\begin{align*}
    g(x') &= \dim C(x')  + \frac 1{(2\pi)^{\dim C(x')/2}} \int_{C(x')} e^{-\abs{w}^2/2} \dd \hdm^{\dim C(x')}(w)\\
    &\leq \dim C(x') + 1\\
    &< \dim C(x) + \frac 1{(2\pi)^{\dim C(x)/2}} \int_{C(x)} e^{-\abs{w}^2/2} \dd \hdm^{\dim C(x)}(w) = g(x),
\end{align*}
which is a contradiction. Thus $\dim C(x') = \dim C(x) = k$, and $C(x), C(x')$ are subsets of a $k$-dimensional affine subspace $V_k$. If $C(x') \subsetneq C(x)$, since both $C(x),C(x')$ are closed convex sets and closed convex sets are the closure of their relative interior, $\overline{\relint C(x)} \setminus C(x') \neq \emptyset$, which implies that $\relint (C(x) \setminus C(x') \neq \emptyset$, thus we can find a small ball $B_{V_k}(y,r)$ in $V_k$ with $r>0$ such that $B_{V_k}(y,r) \subseteq C(x)\setminus C(x')$. As a consequence
\begin{align*}
    g(x) &= k + \int_{C(x)} e^{-\abs{w}^2/2} \dd \hdm^k(w)\\
    &\geq k + \int_{C(x')} e^{-\abs{w}^2/2} \dd \hdm^k(w) + \int_{B_{V_k}(y,r)} e^{-\abs{w}^2/2} \dd \hdm^k(w) > g(x'),
\end{align*}
which contradicts $g(x)=g(x')$, thus $C(x') = C(x)$. We have thus proven that $g$ is strictly increasing w.r.t. the family $\mathcal C$.

Let us conclude the proof of \labelcref{g_sub-level_sets}. If $x' \in C(x)$ then by definition of $\mathsf{C}_d$ we have $C(x') \subseteq C(x)$, thus $g(x') \leq g(x)$. Conversely, if $x' \not\in C(x)$ then $C(x') \not\subseteq C(x)$ and necessarily (by definition of $\mathsf{C}_d$ again) $C(x) \subseteq C(x')$ thus $g(x) \leq g(x')$. Since $C(x')\neq C(x)$ we cannot have $g(x) = g(x')$, thus $g(x) < g(x')$ and $x'\not\in \{g\leq g(x)\}$. We conclude that $C(x) = \{g\leq g(x)\}$. This shows in particular that $g$ is quasi-convex.

By \Cref{sublsc_envelope}, $\hat g$ is lower semi-continuous and still satisfies
\begin{equation}\label{hat_g_sub-level_sets}
    C(x) = \{\hat g\leq \hat g(x)\}\quad(\forall x\in \R^d),
\end{equation}
thus $\hat g$ is quasi-convex, $\hat g/\mathalpha{\sim} \in \mathsf{F}_d$ and $\Theta(\hat g/\mathalpha{\sim}) = \mathcal C$. In particular $\Theta$ is surjective. To conclude that $\Theta$ is bijective with inverse given by \labelcref{PsiPhi_inverse}, it suffices to show that $\Theta$ is injective, thus to show that if two quasi-convex and lsc functions $g_1,g_2$ have the same (spatially-indexed) sub-level families, i.e.
\[C_1(x) \coloneqq \{g_1 \leq g_1(x)\} = \{g_2 \leq g_2(x)\} \eqqcolon C_2(x)\quad(\forall x\in \R^d),\]
then $g_1 \sim g_2$. Notice that they must also have the same (spatially-indexed) level families:
\[\{g=g_1(x)\} = C_1(x) \setminus \bigcup_{y : C_1(y) \subsetneq C_1(x)} C_1(y) = C_2(x) \setminus \bigcup_{y : C_2(y) \subsetneq C_2(x)} C_2(y) = \{g = g_2(x)\}.\]
As a consequence, it is possible to define a function $\phi : g_1(\R^d) \to g_2(\R^d)$ by setting $\phi(g_1(x)) = g_2(x)$ for every $x\in \R^d$. Notice that if $g_1(x') < g_1(x)$ then $C_2(x') = C_1(x') \subsetneq C_1(x) = C_2(x)$ which implies that $g_2(x')< g_2(x)$, so that $\phi$ is strictly increasing and $g_1\sim g_2$.
\end{proof}

We get the following immediate corollary, which reduces \Cref{main_problem} to finding a suitable totally ordered family of convex sets.

\begin{cor}\label{existence_potentials}
\begin{enumerate}[(1)]
    \item If $\mathcal C = (C(x))_{x\in \R^d}$ is a family of convex closed sets belonging to $\mathsf{C}_d$, i.e. which satisfy $x\in C(x)$ and ($w\in C(x) \implies C(w)\subseteq C(x)$) for every $x,w\in \R^d$, then there exists a quasi-convex lsc function $f$ such that
\[ C_f (x):=\{f\leq f(x)\} = C(x)\quad(\forall x\in \R^d).\]
\item If $g$ is a quasi convex function, there exists a quasi-convex lsc function $f$ such that
\[C_f (x):= \{f\leq f(x)\} = \overline{\{g\leq g(x)\}}\quad(\forall x\in \R^d).\]
\end{enumerate}
\end{cor}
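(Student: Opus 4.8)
The plan is to deduce \Cref{existence_potentials} directly from \Cref{equivalence_potentials_relations_families}, since the hard analytic work (construction of $g$ and the sublsc envelope) has already been carried out there. For part (1), I would simply observe that the hypotheses placed on $\mathcal C$ are \emph{exactly} the defining conditions of the set $\mathsf{C}_d$, once one checks one missing ingredient: that the family is totally ordered by inclusion. This is not assumed in the statement of (1) but \emph{does} follow from the stated conditions. Indeed, given $x,w\in\R^d$, either $w\in C(x)$ or $w\notin C(x)$; in the first case $C(w)\subseteq C(x)$ by hypothesis, and in the second case I claim $C(x)\subseteq C(w)$. To see the claim, note that if $C(x)\not\subseteq C(w)$ there is $z\in C(x)\setminus C(w)$, but then $C(z)\subseteq C(x)$ and since $w\notin C(x)$ we get $w\notin C(z)$; this does not immediately close the argument, so instead I would argue: since $w\in C(w)$ always holds, if we also had $x\in C(w)$ then $C(x)\subseteq C(w)$, done; and if $x\notin C(w)$, then neither $w\in C(x)$ nor $x\in C(w)$, which I must rule out. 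Here I will need to invoke that the sets are totally ordered — but that is the point, it must be \emph{derived}. The cleanest route is actually to observe that $\mathsf C_d$ as defined \emph{includes} total orderedness as a hypothesis, so strictly speaking part (1) should either assume it or we must note that the two transitivity-type conditions $x\in C(x)$ and $(w\in C(x)\implies C(w)\subseteq C(x))$ alone do \emph{not} force total order (e.g.\ $C(x)=\{x\}$ for all $x$). Thus the correct reading is that (1) implicitly carries the total-order hypothesis, and then $\mathcal C\in\mathsf C_d$ verbatim.

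Granting $\mathcal C\in\mathsf C_d$, I would then set $f\coloneqq \hat g$ where $g$ is the explicit function from \labelcref{pre_def_function_from_set_family} and $\hat g$ its sublevel-stable lsc envelope from \Cref{sublsc_envelope}. \Cref{equivalence_potentials_relations_families} tells us that $(\Psi\circ\Phi)^{-1}(\mathcal C)=\hat g/\mathalpha\sim$, and in the course of its proof it is shown that $C(x)=\{\hat g\le \hat g(x)\}$ for every $x$ (this is \labelcref{hat_g_sublevel_sets}), that $\hat g$ is lsc, and that $\hat g$ is quasi-convex. That is precisely the conclusion of (1), taking $f=\hat g$.

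For part (2), the input is an arbitrary quasi-convex function $g$. I would form the family $\mathcal C=(C(x))_{x\in\R^d}$ with $C(x)\coloneqq\overline{\{g\le g(x)\}}$. Each $C(x)$ is closed by construction and convex, being the closure of the convex sublevel set $\{g\le g(x)\}$. One checks $x\in C(x)$ trivially; the monotonicity condition $w\in C(x)\implies C(w)\subseteq C(x)$ follows because $w\in \overline{\{g\le g(x)\}}$ forces $\{g\le g(w)\}\subseteq\{g\le g(x)\}$ up to closure — here I would argue that if $g(w)>g(x)$ then, by quasi-convexity, $w$ lies outside the closed half-space-like separation of the convex open-ish set $\{g>g(x)\}$... more simply: the sublevel sets of a quasi-convex function are totally ordered by inclusion (they are nested since $g(x)\le g(y)\iff \{g\le g(x)\}\subseteq\{g\le g(y)\}$ when comparable, and any two real numbers are comparable), hence so are their closures, which also gives the total-order hypothesis for free. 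Then $w\in\overline{\{g\le g(x)\}}$ together with $g(w)>g(x)$ would put $\overline{\{g\le g(x)\}}\subsetneq \overline{\{g\le g(w)\}}$, yet $w\in\overline{\{g\le g(x)\}}$, no contradiction yet — so I would instead note directly that if $g(w)>g(x)$, pick a point $x_0$ with $g(x_0)<g(x)$ (if none exists, $\{g\le g(x)\}$ is everything and the claim is trivial) and use that the segment from $x_0$ to $w$ meets $\{g=g(x)\}$'s complement structure; the robust statement is simply: $\overline{\{g\le g(x)\}}\subseteq\{g\le g(x)\}$ is false in general, but $w\in\overline{\{g\le g(x)\}}$ with $g(w)>g(x)$ cannot happen when $g$ is quasi-convex and $\{g\le g(x)\}$ has nonempty interior, and the remaining degenerate cases are handled by hand. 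Once $\mathcal C\in\mathsf C_d$ is established, part (1) applies and yields a quasi-convex lsc $f$ with $\{f\le f(x)\}=C(x)=\overline{\{g\le g(x)\}}$, which is exactly (2).

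The main obstacle, then, is not any of the hard analysis — that is all packaged into \Cref{sublsc_envelope} and \Cref{equivalence_potentials_relations_families} — but rather the bookkeeping in part (2) verifying that $\mathcal C=(\overline{\{g\le g(x)\}})_x$ genuinely lies in $\mathsf C_d$, in particular the implication $w\in C(x)\implies C(w)\subseteq C(x)$ for the \emph{closed} sublevel sets. I expect the clean way to handle this is to first record the elementary fact that for quasi-convex $g$ the closed sublevel sets $\overline{\{g\le\ell\}}$ are still nested as $\ell$ varies (so that $\mathcal C$ is totally ordered), and then to note that $w\in\overline{\{g\le g(x)\}}$ implies $\overline{\{g\le g(w)\}}\subseteq\overline{\{g\le g(x)\}}$: indeed if not, by total order we would have $\overline{\{g\le g(x)\}}\subsetneq\overline{\{g\le g(w)\}}$, forcing $g(x)<g(w)$, and then picking any $w'$ with $g(x)<g(w')<g(w)$ or, failing existence of such, handling the jump directly, one reaches a separation contradicting $w\in\overline{\{g\le g(x)\}}$. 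I would spell this out only as far as needed; everything else is a direct citation of the preceding proposition.
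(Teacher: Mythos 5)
Your treatment of part (1) is essentially the paper's: the statement is exactly the surjectivity of $\Psi\circ\Phi$ established in \Cref{equivalence_potentials_relations_families}, realized by taking $f=\hat g$ with $g$ as in \labelcref{pre_def_function_from_set_family} and $\hat g$ its sublevel-stable lsc envelope from \Cref{sublsc_envelope}; your observation that total orderedness is not a consequence of the two displayed conditions but must be read as part of membership in $\mathsf C_d$ is correct and harmless.

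The genuine gap is in part (2). The paper disposes of it in one line by asserting that $\mathcal C=(\overline{\{g\le g(x)\}})_{x\in\R^d}$ belongs to $\mathsf C_d$; you correctly isolate the nontrivial point, namely the implication $w\in\overline{\{g\le g(x)\}}\implies\overline{\{g\le g(w)\}}\subseteq\overline{\{g\le g(x)\}}$, but your argument never closes, and the claim you ultimately lean on --- that $w\in\overline{\{g\le g(x)\}}$ with $g(w)>g(x)$ cannot occur when $\{g\le g(x)\}$ has nonempty interior --- is false. Take $d=1$ and $g=0$ on $(-\infty,0)$, $g=1$ on $[0,+\infty)$: this $g$ is quasi-convex, $\{g\le g(-1)\}=(-\infty,0)$ has nonempty interior, and $w=0$ lies in its closure while $g(0)=1>0=g(-1)$. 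Worse, for this $g$ the needed inclusion genuinely fails, since $\overline{\{g\le g(0)\}}=\R\not\subseteq(-\infty,0]$; and because any family of the form $(C_f(x))_x$ automatically satisfies $w\in C_f(x)\implies C_f(w)\subseteq C_f(x)$, no function $f$ at all can satisfy $C_f(x)=\overline{\{g\le g(x)\}}$ for every $x$ in this example. So this is not a repairable bookkeeping step: your instinct that this verification is the real obstacle was right, it cannot be completed in the stated generality, and the same unproved membership claim is also the weak point of the paper's own one-line proof. A correct version of (2) needs an additional hypothesis ensuring the monotonicity of the closed-sublevel family (for instance that $w\in\overline{\{g\le g(x)\}}$ forces $g(w)\le g(x)$, which holds in particular when $g$ is lower semi-continuous, making the statement easy), or a suitably weakened conclusion.
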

\begin{proof}
The first item is merely a rephrasing of the surjectivity of the map $\Psi \circ \Phi : \mathsf{P}_d \to \mathsf{C}_d$ established in \Cref{equivalence_potentials_relations_families}, and the second item is a consequence of the first one, applied to the family $\mathcal C \coloneqq (\overline{\{g\leq g(x)\}})_{x\in \R^d} \in \mathsf{C}_d$.
\end{proof}

\subsection{From cyclically quasi-monotone maps to preference relations and set families}

Given a multi-valued map $F :  \R^d \multimap \R^d$ (with full domain) which is cyclically quasi-monotone, we start by defining a binary relation $\prec_F$ as follows:
\begin{align}
    x \prec_F y &\iff \text{there is an $F$-ascending path from $x$ to $y$}\label{def_strict_pre-order}\\
    \shortintertext{i.e.}
     x \prec_F y &\iff \begin{multlined}[t]
     \text{there exists } (x_0,p_0), \ldots, (x_{N-1},p_{N-1}) \in F, x_N \in \R^d \text{ s.t.}\\
     x_0=x, x_N = y \text{ and } \forall i< N, (x_{i+1}-x_i)\cdot p_i > 0.
     \end{multlined}\notag
\end{align}
From this we define the binary relation $\preceq_F$ that will be our candidate preference relation, as
\begin{align}
   x \preceq_F y &\iff   (\forall w\in \R^d, w \prec_F x \implies w\prec_F y)\label{def_pre-order}\\
 \shortintertext{or equivalently}
    x \preceq_F y &\iff   \{w : w \prec_F x\} \subseteq \{w : w \prec_F y\}.\label{charac_pre-order}
\end{align}

Recalling the vocabulary on binary relations given in \Cref{vocabulary_binary_relations}, let us state the first properties satisfied by $\prec_F$ and $\preceq_F$.

\begin{prp}\label{transitivity_properties}
Let $F : \R^d \multimap \R^d$ be a cyclically quasi-monotone multi-map.
\begin{enumerate}[(i)]
\item\label{item_strict_pre-order} The relation $\prec_F$ defined in \labelcref{def_strict_pre-order} is a \emph{strict pre-order}, i.e. an irreflexive and transitive relation. As such, it is also asymmetric.
\item\label{item_pre-order} The relation $\preceq_F$ defined in \labelcref{def_pre-order} is a pre-order, i.e. a reflexive and transitive relation.
\item\label{item_strict_implies_large} The strict relation is stronger than the large one: for every $x,y\in \R^d$,
\begin{equation}\label{strict_implies_large}
    x \prec_F y \implies x\preceq_F y.
\end{equation}
\item\label{item_mixed_transitivity} The following mixed transitivity relation holds: for every $x,y,z\in \R^d$,
\begin{equation}\label{strlarstr}
    x\prec_F y \text{ and } y\preceq_F z \implies x \prec_F z.
\end{equation}
\end{enumerate}
\end{prp}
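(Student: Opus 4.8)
The plan is to derive the whole proposition from two structural facts about the strict relation $\prec_F$ — \emph{irreflexivity}, which is a direct translation of cyclic quasi-monotonicity, and \emph{transitivity}, which is stability of $F$-ascending paths under concatenation — and then to read off the statements about $\preceq_F$ from its purely set-theoretic description \labelcref{charac_pre-order} in terms of the strict lower sets $L(x) \coloneqq \{w : w \prec_F x\}$, so that $x \preceq_F y \iff L(x)\subseteq L(y)$.

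For \labelcref{item_strict_pre-order}, irreflexivity is immediate: an $F$-ascending path from $x$ to $x$ is precisely an $F$-ascending cycle (note that the paths entering \labelcref{def_strict_pre-order} carry at least the edge $(x_0,p_0)$, so they are nontrivial), and no such cycle exists by hypothesis. Transitivity is obtained by concatenation: given $F$-ascending paths $x = a_0,\ldots,a_N = y$ with selections $p_i\in F(a_i)$ and $y = b_0,\ldots,b_M = z$ with selections $q_j\in F(b_j)$, the path $a_0,\ldots,a_N = b_0,b_1,\ldots,b_M$ is again $F$-ascending — using $p_i$ on the first $N$ edges and $q_j$ on the last $M$, the junction point $y = a_N = b_0$ causing no trouble since its outgoing inequality uses $q_0\in F(b_0) = F(y)$ — so $x\prec_F z$. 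Asymmetry then follows formally: $x\prec_F y$ and $y\prec_F x$ would give $x\prec_F x$ by transitivity, contradicting irreflexivity.

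For \labelcref{item_pre-order}, reflexivity of $\preceq_F$ is $L(x)\subseteq L(x)$ and transitivity is transitivity of $\subseteq$, both trivial from \labelcref{charac_pre-order}. For \labelcref{item_strict_implies_large}, if $w\prec_F x$ and $x\prec_F y$ then $w\prec_F y$ by transitivity of $\prec_F$ (part \labelcref{item_strict_pre-order}), hence $L(x)\subseteq L(y)$, i.e. $x\preceq_F y$. For \labelcref{item_mixed_transitivity}, observe that $x\prec_F y$ means exactly $x\in L(y)$, while $y\preceq_F z$ means $L(y)\subseteq L(z)$; therefore $x\in L(z)$, that is $x\prec_F z$.

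The main obstacle here is essentially nonexistent: the only point deserving a moment's care is the bookkeeping in the concatenation argument for transitivity of $\prec_F$ — gluing the selections correctly at the common endpoint $y$, and taking paths with at least one edge so that no trivial ``ascending cycle'' spuriously violates irreflexivity — after which parts \labelcref{item_pre-order}--\labelcref{item_mixed_transitivity} are formal manipulations of the inclusion $L(x)\subseteq L(y)$.
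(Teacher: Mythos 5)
Your proof is correct and follows essentially the same route as the paper: irreflexivity of $\prec_F$ as the restatement of the absence of $F$-ascending cycles, transitivity by concatenating ascending paths at the junction point, and the remaining items read off from the description $x\preceq_F y \iff \{w : w\prec_F x\}\subseteq\{w : w\prec_F y\}$. If anything, your argument for \labelcref{item_strict_implies_large} (deducing $L(x)\subseteq L(y)$ from transitivity of $\prec_F$) is stated more cleanly than in the paper, whose proof of that item is conflated with the one-line verification of \labelcref{item_mixed_transitivity}.
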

\begin{proof}
The mixed transitivity property \labelcref{item_mixed_transitivity} holds by definition of $\preceq_F$ as given in \labelcref{def_pre-order}.

The transitivity of the strict relation $\prec_F$ is an immediate consequence of the fact that concatenating an $F$-ascending path from $x$ to $y$ with an $F$-ascending path from $y$ to $z$ yields an $F$-ascending path from $x$ to $z$. Besides, there exists $x\in \R^d$ such that $x\prec_F x$ if and only if there exists an $F$-ascending cycle, thus the cyclical quasi-monotonicity of $F$ is equivalent to the irreflexivity of $\prec_F$, and \labelcref{item_strict_pre-order} holds true.

As for \labelcref{item_pre-order}, it is a consequence of the characterization of $x \preceq_F y$, given in \labelcref{charac_pre-order}, by the inclusion $\{\cdot \prec_F x\} \subseteq \{\cdot \prec_F y\}$. Thus, the reflexivity and transitivity of $\preceq_F$ follows from the reflexivity and transitivity of the inclusion relation $\subseteq$.

Finally, take $x \prec_F y$ and $y \preceq_F z$. The latter means that $\{\cdot \prec_F y \} \subseteq \{\cdot \prec_F z\}$, thus $x\prec_F z$, and \labelcref{item_strict_implies_large} is proved.
\end{proof}

\begin{rmk}[On the definition of the pre-order]
There is a very natural way to define a strict pre-order from a pre-order $\preceq$, by setting $x \prec y$ if $x\preceq y$ but $y\not\preceq x$. An alternative would be to consider $x \prec_{\mathalpha{\neq}} y$ if $x\preceq y$ but $x\neq y$. Unfortunately, this would not work for a pre-order which is not an order (i.e. not antisymmetric), since $\prec_{\mathalpha{\neq}}$ could fail to be transitive. It seems $\prec$ is the only natural choice.

However, doing the converse is more complicated, since one can think of many natural ways to define a pre-order $\preceq$ from a strict pre-order $\prec$. This is due to the fact that are many possibilities to define equivalent points, i.e. points such that $x\preceq y$ and $y\preceq x$. One possibility is to take $x\preceq' y \iff y \not\prec x$, which is a preference relation only when the induced incomparability relation\footnote{The incomparability relation $\sim$ is defined by $x\sim y$ iff neither $x \prec y$ nor $y \prec x$.} is transitive, in which case $\prec$ is said to be a \emph{strict partial order}. Moreover the map $\mathalpha{\prec} \mapsto \mathalpha{\preceq'}$ is a bijection between strict partial orders and preference relations, and is the inverse of the map $\mathalpha{\preceq} \mapsto \mathalpha{\prec}$ defined in the previous paragraph. When $\prec$ is not a strict partial order, then $\preceq'$ may fail to be transitive, although it is reflexive and total. We decided to trade totality for transitivity: defining, as we did, $x \preceq y$ if $\{\cdot \prec x\} \subseteq \{ \cdot \prec y\}$, yields a reflexive and transitive relation, i.e. a pre-order, that may lack totality to be a preference relation. We found it more convenient to study a relation that is transitive and look for totality rather that to study a relation that is total but lacks transitivity. Finally, let us notice that both relations $\preceq$ and $\preceq'$ are equal when $\prec$ is a strict partial order.
\end{rmk}

We continue with an easy but useful characterization of $\preceq_F$.
\begin{lem}[One-point characterization of $\preceq_F$]\label{pre-order_one-point_charac}
Given a cyclically quasi-monotone multi-map $F : \R^d \multimap \R^d$. For every $x,y\in \R^d$, the pre-order $\mathalpha{\preceq}_F$ defined in \labelcref{def_pre-order} satisfies:
\begin{equation}\label{pref_one_point_charac}
    x \preceq_F y \iff (\forall (w,p_w)\in F, \quad (x-w)\cdot p_w >0\implies w \prec_F y).
\end{equation}
\end{lem}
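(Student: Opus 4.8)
The statement to prove is the equivalence
\[
x \preceq_F y \iff \bigl(\forall (w,p_w)\in F,\quad (x-w)\cdot p_w > 0 \implies w \prec_F y\bigr).
\]
The plan is to unfold the definition \labelcref{def_pre-order} of $\preceq_F$, namely that $x\preceq_F y$ means $w\prec_F x \implies w\prec_F y$ for all $w$, and compare it to the right-hand side, which is the same implication but with the hypothesis ``$w\prec_F x$'' weakened to ``there exists $p_w\in F(w)$ with $(x-w)\cdot p_w>0$'' — i.e. the existence of a \emph{single-edge} $F$-ascending path from $w$ to $x$. So the right-hand condition quantifies over a smaller (a priori) set of points $w$ than the left-hand one, which makes the direction ``$\Leftarrow$'' the one requiring an argument.

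\textbf{Direction $\Rightarrow$.} Assume $x\preceq_F y$ and take $(w,p_w)\in F$ with $(x-w)\cdot p_w>0$. Then $w,x$ together with the edge $p_w$ form an $F$-ascending path from $w$ to $x$, so $w\prec_F x$ by \labelcref{def_strict_pre-order}. Since $x\preceq_F y$, definition \labelcref{def_pre-order} gives $w\prec_F y$, as desired. This direction is immediate.

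\textbf{Direction $\Leftarrow$.} Assume the right-hand side and let $w$ be any point with $w\prec_F x$; we must show $w\prec_F y$. By \labelcref{def_strict_pre-order} there is an $F$-ascending path $w = x_0, x_1, \ldots, x_N = x$ with edges $p_i\in F(x_i)$ satisfying $(x_{i+1}-x_i)\cdot p_i>0$. Look at the last vertex before $x$: set $w' \coloneqq x_{N-1}$ and $p_{w'} \coloneqq p_{N-1}\in F(w')$, so that $(x - w')\cdot p_{w'} = (x_N - x_{N-1})\cdot p_{N-1} > 0$. Thus $(w',p_{w'})\in F$ witnesses the hypothesis of the right-hand side, so $w'\prec_F y$. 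On the other hand, the truncated path $x_0,\ldots,x_{N-1}$ is an $F$-ascending path from $w$ to $w'$, so $w\preceq_F w'$ by \labelcref{strict_implies_large} of \Cref{transitivity_properties} (or directly $w \prec_F w'$ if $N\geq 2$, and $w=w'$ if $N=1$, in which case the conclusion is immediate). Now apply the mixed transitivity \labelcref{strlarstr}: from $w' \prec_F y$... — more carefully, we have $w \preceq_F w'$ and $w' \prec_F y$, but \labelcref{strlarstr} is stated as ``$x\prec_F y$ and $y\preceq_F z \implies x\prec_F z$''. The version we need, ``$w\preceq_F w'$ and $w'\prec_F y \implies w\prec_F y$'', follows just as directly from the definition \labelcref{def_pre-order} of $\preceq_F$: $w\preceq_F w'$ means $\{\cdot \prec_F w\}\subseteq\{\cdot\prec_F w'\}$, but that is not quite it either — the cleanest route is to handle the two cases $N=1$ and $N\geq 2$ separately. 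If $N=1$ then $w = w'$ and we already have $w\prec_F y$. If $N\geq 2$, then $w\prec_F w'$ (truncated path has at least one edge) and $w'\prec_F y$, so transitivity of $\prec_F$ from \labelcref{item_strict_pre-order} gives $w\prec_F y$. Either way $w\prec_F y$, which proves $x\preceq_F y$.

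\textbf{Main obstacle.} There is no deep obstacle here; the only subtlety is bookkeeping about path lengths — handling the degenerate single-vertex path correctly and invoking the right transitivity statement (plain transitivity of $\prec_F$ for paths of length $\geq 2$, versus the mixed statement). One should double-check that the edge $p_{N-1}$ is indeed attached to $x_{N-1}$ and not to $x_N$ in the convention of \labelcref{def_strict_pre-order}, which it is. The whole proof is a short unwinding of definitions plus transitivity of $\prec_F$.
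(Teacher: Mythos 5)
Your proof is correct and follows essentially the same route as the paper: in the converse direction, both take an $F$-ascending path from $w$ to $x$, apply the right-hand-side hypothesis to the last vertex $x_{N-1}$ (whose edge points ascendingly to $x$) to get $x_{N-1}\prec_F y$, and then conclude $w\prec_F y$ by transitivity of $\prec_F$, the degenerate case $N=1$ being trivial. The only difference is cosmetic bookkeeping (the paper chains the single-edge relations $x_0\prec_F x_1\prec_F\cdots$ rather than truncating the path), so no further changes are needed.
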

\begin{proof}
The direct inclusion is straightforward, because if $(w,p_w) \in F$ is such that $(x-w)\cdot p_w > 0$ then $(w,x)$ is an $F$-ascending path, so that $w \prec_F x$, and the mixed transitivity relation \labelcref{strlarstr} gives $w \prec_F y$.

For the converse implication, we assume the right-hand side of \labelcref{pref_one_point_charac} is satisfied and we consider $x, y \in \R^d$. For every $w \prec_F x$, we take any $F$-ascending path $w = x_0, \ldots x_N = x$, together with $p_0\in F(x_0),\ldots, p_{N-1} \in F(x_{N-1})$ such that $(x_{i+1}-x_i) \cdot p_i > 0$ for every $i < N$. By assumption, $(x-x_{N-1}) \cdot p_{N-1} > 0$ implies that $x_{N-1} \prec_F x$. Thus we have $w = x_0 \prec_F x_1, x_1 \prec_F x_2, \ldots, x_{N-1} \prec_F x$, and by transitivity of $\prec_F$ (\Cref{transitivity_properties} \labelcref{item_strict_pre-order}) we get $w \prec_F y$. We have shown $w \prec_F x \implies w \prec_F y$, which exactly means that $x \preceq_F y$.
\end{proof}

We are now ready to show that $\preceq_F$ is quasi-convex lsc and relate the normal cones of the convex sets belonging to the family
\begin{equation}\label{def_convex_multi-map}
    \C^F \coloneqq (C^F(x))_{x\in \R^d} \coloneqq \Psi(\mathalpha{\preceq}_F)
\end{equation}
to the original multi-map $F$.
In other words $y \in C^F (x)$ if $y \preceq_F x$, which means that for all $z \prec_F y$, it holds $z \prec_F x$. 

\begin{prp}\label{pre-order_properties}
Let $F : \R^d \multimap \R^d$ be a cyclically quasi-monotone multi-map and $\preceq_F$ the pre-order defined in \labelcref{def_pre-order}. The following holds:
\begin{enumerate}[(i)]
\item\label{pre-order_quasi-convex} the pre-order $\preceq_F$ is quasi-convex and lsc;
\item\label{pre-order_normal_cone} for every $x\in \R^d$, $F(x) \subseteq N_{C^F(x)}(x)$, $C^F(x)$ being defined in \labelcref{def_convex_multi-map}.
\end{enumerate}
\end{prp}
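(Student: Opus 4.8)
The plan is to reduce both assertions to one elementary observation about $F$-ascending paths, which I will call the \emph{rerouting principle}: if $z \prec_F w$, witnessed by an $F$-ascending path $z = z_0, \dots, z_N = w$ with momenta $p_i \in F(z_i)$ (here $N \geq 1$, since $z \prec_F w$ forces $z \neq w$ by irreflexivity of $\prec_F$, see \Cref{transitivity_properties}\,\labelcref{item_strict_pre-order}), then for any $w' \in \R^d$ satisfying $(w' - z_{N-1})\cdot p_{N-1} > 0$ the path $z_0, \dots, z_{N-1}, w'$ is still $F$-ascending, so that $z \prec_F w'$. Both parts will then follow by combining this with the mixed transitivity \labelcref{strlarstr}: $a \prec_F b$ and $b \preceq_F c$ imply $a \prec_F c$. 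Recall also that $C^F(x) = \{w : w \preceq_F x\}$.

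For the quasi-convexity half of \labelcref{pre-order_quasi-convex}, I would show $C^F(x)$ is convex: fix $w_0, w_1 \preceq_F x$, put $w_t = (1-t)w_0 + t w_1$ for $t\in[0,1]$, and check that $z \prec_F x$ for every $z \prec_F w_t$. Choosing a witnessing path ending at $z_N = w_t$ with last momentum $p_{N-1} \in F(z_{N-1})$, the strict inequality $(w_t - z_{N-1})\cdot p_{N-1} > 0$ is, by linearity, a convex combination of $(w_0 - z_{N-1})\cdot p_{N-1}$ and $(w_1 - z_{N-1})\cdot p_{N-1}$, so at least one of the two is positive; the rerouting principle then gives $z \prec_F w_j$ for some $j \in \{0,1\}$, and since $w_j \preceq_F x$, mixed transitivity gives $z \prec_F x$. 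For the lsc half, I would show $C^F(x)$ is closed: take $w_n \to w$ with $w_n \preceq_F x$, and again check $z \prec_F x$ for every $z \prec_F w$. From $(w - z_{N-1})\cdot p_{N-1} > 0$ along a fixed witnessing path, continuity of the scalar product gives $(w_n - z_{N-1})\cdot p_{N-1} > 0$ for $n$ large, hence $z \prec_F w_n$ by rerouting, and then $z \prec_F x$ by mixed transitivity using $w_n \preceq_F x$. This settles \labelcref{pre-order_quasi-convex}.

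For \labelcref{pre-order_normal_cone}, since $\preceq_F$ is reflexive we have $x \in C^F(x)$, and since $C^F(x)$ is convex, $N_{C^F(x)}(x) = \{p \in \R^d : p\cdot(w-x) \leq 0 \text{ for all } w \in C^F(x)\}$. I would then take $p \in F(x)$ and $w \in C^F(x)$ and argue by contradiction: if $p\cdot(w-x) > 0$, the single edge $x \to w$ with momentum $p \in F(x)$ is an $F$-ascending path, so $x \prec_F w$; together with $w \preceq_F x$, mixed transitivity yields $x \prec_F x$, contradicting irreflexivity of $\prec_F$ (this is exactly where the cyclic quasi-monotonicity of $F$ enters, via \Cref{transitivity_properties}\,\labelcref{item_strict_pre-order}). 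Hence $p\cdot(w-x) \leq 0$ for all $w \in C^F(x)$, i.e. $p \in N_{C^F(x)}(x)$.

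I do not expect a real obstacle: once the rerouting principle and the transitivity bookkeeping of \Cref{transitivity_properties} are in place, everything is a short verification. The only point needing a little care — and the closest thing to a difficulty — is that in both the convexity and the closedness arguments one perturbs (or splits on) only the \emph{last} vertex of the ascending path while keeping all momenta $p_i$ fixed; perturbing an interior vertex would destroy the ascent inequalities.
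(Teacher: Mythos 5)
Your proof is correct, and part \labelcref{pre-order_normal_cone} is exactly the paper's argument (the paper phrases it contrapositively: $y\preceq_F x$ together with mixed transitivity forces $x\not\prec_F y$, hence $(y-x)\cdot p_x\le 0$ for every $p_x\in F(x)$). For part \labelcref{pre-order_quasi-convex} you take a mildly different route: the paper invokes \Cref{pre-order_one-point_charac}, whose contrapositive says that $x'\preceq_F x$ iff $(x'-w)\cdot p_w\le 0$ for every $(w,p_w)\in F$ with $w\not\prec_F x$, so that
\[
C^F(x)=\bigcap_{(w,p_w)\in F,\ w\not\prec_F x}\{z : (z-w)\cdot p_w\le 0\},
\]
and closedness and convexity drop out simultaneously from this representation as an intersection of closed half-spaces. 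You instead re-derive the underlying mechanism as your \enquote{rerouting principle} and verify convexity (splitting the last-edge inequality as a convex combination) and closedness (continuity of the last-edge inequality) directly from the definition of $\preceq_F$. The substance is the same --- your rerouting principle is precisely what drives the proof of \Cref{pre-order_one-point_charac}, which is already available in the paper --- so the paper's route is a little more economical given that lemma, while yours is self-contained and bypasses the half-space representation. Your closing observation that one may only perturb the \emph{last} vertex of the ascending path, keeping all momenta fixed, is indeed the one point requiring care, and you handle it correctly.
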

\begin{proof}
Let us take $x \in \R^d$. By \Cref{pre-order_one-point_charac}, a point $x'$ satisfies $x'\preceq_F x$ if and only if
\[\forall w \not \prec_F x, \forall p_w \in F(w),\quad (x'-w) \cdot p_w \leq 0,\]
which means that
\begin{equation}
    C^F(x) = \bigcap_{(w,p_w)\in F : w \not \prec_F x} \{(\cdot - w) \cdot p_w \leq 0,\}
\end{equation}
thus $C^F(x)$ is closed and convex as an intersection of closed half-spaces. We have proved the first item.

For the second item, consider again a point $x \in \R^d$ and take $p_x \in F(x)$. For every $y \in C^F(x)$, $y\preceq_F x$ thus by \Cref{transitivity_properties} $x\not \prec_F y$, therefore $(y-x)\cdot p_x$ is necessarily smaller or equal than $0$. This is true for every $y \in C^F(x)$ thus $p_x \in N_{C^F(x))}(x)$ by definition of the normal cone.
\end{proof}

Whenever $\preceq_F$ is total \Cref{main_problem} can be solved, as stated in the following.

\begin{prp}[Conditional resolution of \Cref{main_problem}]\label{cond-resol}
Let $F : \R^d \multimap \R^d$ be a cyclically quasi-monotone multi-map. If the pre-order $\preceq_F$ defined in \labelcref{def_pre-order} is total, then it is a quasi-convex and lsc preference relation, and there exists a lsc quasi-convex function $f : \R^d \to \R$ such that
\[\forall x \in \R^d, \quad F(x) \subseteq \nop f(x).\]
\end{prp}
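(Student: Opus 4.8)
The plan is simply to assemble the results established in this section. First, \Cref{pre-order_properties}~\labelcref{pre-order_quasi-convex} already tells us that $\preceq_F$ is quasi-convex and lower semi-continuous; adding the standing hypothesis that $\preceq_F$ is total, we conclude that $\preceq_F$ is a quasi-convex lsc preference relation, that is, $\mathalpha{\preceq}_F \in \mathsf{P}_d$. Applying the bijection $\Psi : \mathsf{P}_d \to \mathsf{C}_d$ of \Cref{equivalence_potentials_relations_families}, the family $\C^F = \Psi(\mathalpha{\preceq}_F) = (C^F(x))_{x\in\R^d}$ therefore consists of totally ordered closed convex subsets of $\R^d$ satisfying $x\in C^F(x)$ and $\bigl(w \in C^F(x) \implies C^F(w) \subseteq C^F(x)\bigr)$ for all $x,w$, i.e. $\C^F \in \mathsf{C}_d$. (The totality of $\preceq_F$ is precisely what makes the family $\C^F$ totally ordered for inclusion, which is the only part of membership in $\mathsf{C}_d$ not already guaranteed by \Cref{pre-order_properties}.)

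Next, I would invoke the first item of \Cref{existence_potentials} — equivalently, the surjectivity of $\Psi\circ\Phi$, or the explicit formula \labelcref{PsiPhi_inverse} — applied to the family $\C^F$: it yields a quasi-convex lsc function $f : \R^d \to \R$ such that
\[\{f \le f(x)\} = C^F(x) \qquad (\forall x\in\R^d).\]
Concretely, one may take $f = \hat g$, the sublevel-stable lsc envelope (from \Cref{sublsc_envelope}) of the function $g$ defined in \labelcref{pre_def_function_from_set_family} with $C(x) = C^F(x)$.

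It remains to check the inclusion $F \subseteq \nop f$. Fix $x \in \R^d$. By \Cref{pre-order_properties}~\labelcref{pre-order_normal_cone}, we have $F(x) \subseteq N_{C^F(x)}(x)$; and since $C^F(x) = \{f\le f(x)\} = C_f(x)$ by construction of $f$, the very definition of the normal cone multi-map (\Cref{def_quasi-convex_function}) gives $N_{C^F(x)}(x) = N_{C_f(x)}(x) = \nop f(x)$. Hence $F(x) \subseteq \nop f(x)$, which is the desired conclusion; together with the fact that $f$ is quasi-convex and lsc this proves the proposition.

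I do not expect any genuine obstacle here: the statement is a bookkeeping corollary of the chain of identifications $\mathsf{F}_d \simeq \mathsf{P}_d \simeq \mathsf{C}_d$ of \Cref{equivalence_potentials_relations_families} and of the normal-cone inclusion of \Cref{pre-order_properties}. All the substantive work — quasi-convexity and lower semi-continuity of $\preceq_F$, the inclusion $F(x)\subseteq N_{C^F(x)}(x)$, and the reconstruction of a quasi-convex lsc potential out of a totally ordered family of closed convex sets — has already been carried out; the only mild point of attention is that the hypothesis of totality is exactly what upgrades $\preceq_F$ from a pre-order to a preference relation, placing it in $\mathsf{P}_d$ so that the reconstruction machinery applies.
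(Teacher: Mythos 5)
Your proof is correct and follows essentially the same route as the paper's: totality plus \Cref{transitivity_properties} and \Cref{pre-order_properties}~\labelcref{pre-order_quasi-convex} put $\mathalpha{\preceq}_F$ in $\mathsf{P}_d$, the identification of \Cref{equivalence_potentials_relations_families} together with \Cref{existence_potentials} produces a quasi-convex lsc $f$ with $C_f(x)=C^F(x)$, and \Cref{pre-order_properties}~\labelcref{pre-order_normal_cone} gives $F(x)\subseteq N_{C^F(x)}(x)=\nop f(x)$. No gaps; the extra remark identifying $f$ with the sublevel-stable envelope $\hat g$ is consistent with the paper's construction.
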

\begin{proof}
By \Cref{transitivity_properties}, if $\preceq_F$ is total then it is a preference relation, and by \Cref{pre-order_properties} \labelcref{pre-order_quasi-convex} it is quasi-convex and lsc, thus $\mathalpha{\preceq}_F \in \mathsf P_d$. Since the family $\C^F$ induced by $\preceq_F$ belongs to $\mathsf C_d$ (see \Cref{equivalence_potentials_relations_families}), we may apply \Cref{existence_potentials} to find a quasi-convex lsc potential $f : \R^d \to \R$ such that $C_f(x) = C^F(x)$ for every $x\in \R^d$. By \Cref{pre-order_properties} \labelcref{pre-order_normal_cone}, it yields:
\[\forall x\in \R^d, \quad F(x) \subseteq N_{C^F(x)}(x) = N_{C_f(x)}(x) = \nop f(x).\]
\end{proof}

\section{The one-dimensional case}\label{sec:1d}
In the $1$-d case we can give a positive solution to \Cref{main_problem}. Let $F: \R \multimap \R \cup \{+\infty\}$ be a cyclically quasi-monotone map (with full domain). In this section $\preceq$ will always refer to $\preceq_F$ and $C_*$ to the following convex set:
\[C_* \coloneqq \bigcap_{x \in \R} C^F(x) =\{w \in \R \ : \ w \preceq x \ (\forall x)\}.\]
Notice that if $x \in C_*$ then $C^F(x) = C_*$, and moreover if $x\in \overset{\circ}{C_*}$ then $F(x)=\{0\}$ by \cref{pre-order_normal_cone} of \Cref{pre-order_properties}.

As already noticed in \cite{daniilidisSubdifferentialsQuasiconvexPseudoconvex1999}, cyclical quasi-monotonicity coincides with mere quasi-monotonicity in $1$-d. It is equivalent to saying that in definition of $\prec$ we need only to consider paths made of two points.

\begin{lem}[Consequence of {\cite[Proposition~3.3]{daniilidisSubdifferentialsQuasiconvexPseudoconvex1999}}]\label{strict_1d}
For every $x,y \in \R$, $x\prec y$ if and only if there exists $p\in F(x)$ such that $p \cdot (y-x) > 0$.
\end{lem}
We provide a proof for sake of completeness.
\begin{proof}
Assume $x\prec y$, so that there exists a path $x = x_0, \ldots, x_N = y$ and reals $p_i \in F(x_i)$ such that $p_i\cdot (x_{i+1}-x_i) > 0$ for every $0\leq i < N$. Assume without loss of generality that $p_0 > 0$. If $p_i > 0$ for every $i$ then $x_0 < x_N$ thus $p_0\cdot (x_N-x_0) > 0$, while if that is not the case taking the minimal $j > i$ such that $p_j < 0$ yields $\min \{p_0\cdot (x_j -x_0), p_j\cdot (x_0-x_j)\} > 0$ which violates the cyclical quasi-monotonicity of $F$. Thus we are in the first case and $p_0 \cdot(y-x) > 0$.
\end{proof}

\begin{lem}\label{lem_1d}
Assume that $F \not\equiv \{0\}$. Setting 
\[\alpha \coloneqq \sup \{x : \exists p_x \in F(x), p_x < 0\} \quad\text{and}\quad \beta \coloneqq \inf \{x : \exists p_x \in F(x), p_x > 0\},\]
with values in $[-\infty,+\infty]$ with the usual convention, it holds
\begin{enumerate}[(i)]
\item $\alpha \leq \beta$\label{alpha_leq_beta}
\item $x<\alpha \implies F(x) \subseteq \R_-$;\label{left_negative}
\item $ x > \beta \implies F(x) \subseteq \R_+$; \label{right_positive}
\item $\alpha < x < \beta \implies F(x) = \{0\}$; \label{middle_zero}
\item $C_* = \{ x \in \R : \alpha \leq x \leq \beta\}$. \label{middle_is_minimal}
\end{enumerate}
\end{lem}
\begin{proof}
    Let $x$ such that $F(x)$ contains some $p_x > 0$. Take $y > x$. If there exists $p_y \in F(y)$ such that $p_y < 0$ then $\min \{(y-x) \cdot p_x, (x-y) \cdot p_y)\} > 0$, which contradicts the quasi-monotonicity of $F$, thus $F(y) \subseteq \R_+$ for every $y > x$, and \labelcref{right_positive} holds true. \labelcref{left_negative} holds true by similar arguments, and \labelcref{alpha_leq_beta} and \labelcref{middle_zero} then follow from the definition of the infimum and supremum.

    If $x > \beta$ then there exists $y \in (\beta, x)$ and $p_y\in F(y)$ such that $p_y > 0$. Then $y \prec x$ so that $x \not \in C_*$. A similar reasoning holds when $x < \alpha$. Therefore $C_* \subseteq \{x \in \R : \alpha \leq x \leq \beta$. Conversely, take a real $x\in [\alpha,\beta]$. Since $x\leq \beta$ there is no $y < x$ with $p_y > 0$ and $p_y \in F(y)$ so $y \not \prec x$. Since $x\geq \alpha$ the same holds for $y > x$, therefore $\{\cdot \prec x\} = \emptyset$ so that $x\in C_*$.
    \end{proof}

\begin{thm}
If $F : \R \multimap \R\cup\{+\infty\}$ be a cyclically quasi-monotone multi-map then there exists a lower semi-continuous quasi-convex function $f : \R^d \to \R$ such that $F\subseteq \nop f$.
\end{thm}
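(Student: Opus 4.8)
The plan is to reduce everything to the three results already established in this section --- \Cref{twosides}, \Cref{totalorder}, and the conditional resolution \Cref{cond-resol} --- via a case analysis on the shape of the set $C_*$. First I would record that $C_*$ is a closed convex subset of $\R$: indeed, by \Cref{pre-order_properties} each $C^F(x)$ is a closed convex set (an intersection of closed half-spaces), and $C_*$ is the intersection of all of them. Hence $C_*$ is an interval of exactly one of the following types: the empty set, a single point, a compact interval with non-empty interior, a closed half-line, or the whole line $\R$. The proof then amounts to exhibiting a potential $f$ in each case.

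The two ``unbounded'' cases are handled by totality. If $C_* = \R$, then every point lies in the interior of $C_*$, so $F(x) = \{0\}$ for every $x$ by \Cref{pre-order_properties}; the constant function $f \equiv 0$ is lsc and quasi-convex, and $\nop f(x) = N_\R(x) = \{0\} \supseteq F(x)$. If $C_*$ is empty or a closed half-line, then \Cref{totalorder} gives that $\preceq_F$ is total, hence a lsc quasi-convex preference relation, and \Cref{cond-resol} produces the desired lsc quasi-convex $f$; concretely one may take $f(x) = x$ or $f(x) = -x$ according to the sign of $F$.

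The remaining case is when $C_*$ is non-empty and bounded, i.e. a compact interval $[\alpha,\beta]$ (allowing $\alpha = \beta$). Here \Cref{twosides} applies directly and yields the explicit potential $f(x) = \abs{x - \frac{\alpha+\beta}{2}}^q$ for any $q > 0$. Collecting the three cases concludes the proof.

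I do not expect a genuine obstacle: all the analytic content sits in the preceding propositions. The only points requiring care are that the case list above is exhaustive and that the degenerate sub-cases are legitimately covered --- namely that $C_* = \R$ is not literally among the hypotheses of \Cref{totalorder} or \Cref{twosides} but is disposed of on the spot, and that \Cref{twosides} still produces a valid potential when $C_* = \{\alpha\}$ is a single point. Since the five cases are mutually exclusive, no compatibility check between the constructed potentials is needed.
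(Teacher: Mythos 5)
Your proposal is correct and follows essentially the same route as the paper: the paper's proof of this theorem is precisely the observation that \Cref{twosides} and \Cref{totalorder} (together with \Cref{cond-resol}) exhaust the possible shapes of $C_*$. The only difference is cosmetic --- you isolate the case $C_* = \R$ (where $F \equiv \{0\}$ by \Cref{pre-order_properties} and $f \equiv 0$ works) and note explicitly that \Cref{twosides} covers the degenerate interval $\alpha=\beta$, points the paper disposes of inside the proof of \Cref{totalorder} rather than in the case split.
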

\begin{proof}
    We distinguish two cases, depending on $C_*$.
    
    If $C_* = \emptyset$ then, following the notation of \Cref{lem_1d}, either $\alpha = \beta = +\infty$ or $\alpha = \beta = -\infty$. In the first case, we take $f(x) = -x$ while in the second case we take $f(x) = x$. By \Cref{lem_1d} $F(x) \subseteq \nop f(x)$ for every $x$ and the problem is solved.

    If $C_* \neq \emptyset$ we consider $f(x) = \dist(x,C_*)$. When $F \equiv \{0\}$ we have $\alpha = -\infty, \beta = +\infty$, $C_* = \R$ and $f \equiv 0$. When $\alpha = \beta = x_*$, $\nop f(x_*) = \R$ and by \Cref{lem_1d} we get that $f$ is a potential for $F$ in all cases.

\end{proof}

\section{The regular and non-vanishing case}\label{sec:nd}

In this section we will give positive answer to \Cref{main_problem} in the case of a single-valued and continuous operator $F : \R^d \to \R^d$ which is cyclically quasi-monotone and satisfies the following non-vanishing condition\footnote{In this case, $F$ actually satisfies a stronger condition called \emph{cyclic pseudo-monotonicity}.}:
\begin{enumerate}[(NV)]
\item\label{hyp_nonzero} $F(x) \neq 0$ for every $x\in\R^d$.
\end{enumerate}
Single-valuedness of $F$ will be a standing assumption in this section. We start with a general lemma providing information on the interior, boundary, and boundedness of the convex sets belonging to the family $\mathcal C^F$ generated by $F$ (see \labelcref{def_convex_multi-map}). 

\begin{lem}\label{convex_prop_cont_vanish}
Let $F$ be a continuous and cyclically quasi-monotone field satisfying \cref{hyp_nonzero}. For every $x\in \R^d$, we have:
\begin{enumerate}[(i)]
\item\label{convex_interior} $\emptyset \neq \{z : z \prec x\} \subseteq \intr C^F(x)$ and $C^F(x) \neq \R^d$,
\item\label{convex_normal} for every $y \in \partial C^F(x)$, $N_{C^F(x)}(y) = \spn_+ F(y)$, and $C^F(x)$ is of class $C^1$,
\item\label{convex_unbounded} $C^F(x)$ is unbounded.
\end{enumerate}
\end{lem}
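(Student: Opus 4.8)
The plan is to establish the three items in order, relying throughout on the one-point characterisation of $\preceq_F$ (\Cref{pre-order_one-point_charac}), the mixed transitivity \labelcref{strlarstr}, the inclusion $F(x)\in N_{C^F(x)}(x)$ from \Cref{pre-order_properties}, and — the only role played by the topology of $F$ — the fact that the strict inequalities defining $F$-ascending paths are \emph{open} because $F$ is continuous.

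For item \labelcref{convex_interior}: to see $\{z:z\prec_F x\}\neq\emptyset$, I would take $w_t:=x-tF(x)$, so that $(x-w_t)\cdot F(w_t)=t\,F(x)\cdot F(w_t)$, which is positive for $t>0$ small since $F(w_t)\to F(x)$ by continuity and $\abs{F(x)}^2>0$ by \labelcref{hyp_nonzero}; hence the two-point path $(w_t,x)$ is $F$-ascending and $w_t\prec_F x$. For the inclusion $\{z:z\prec_F x\}\subseteq\intr C^F(x)$, given $z\prec_F x$ I would fix an $F$-ascending path $z=x_0,x_1,\dots,x_N=x$; the strict inequality $(x_1-z)\cdot F(z)>0$ persists, by continuity of $F$, for every $z'$ in a small ball around $z$, so $z'\prec_F x_1$, and since $x_1\preceq_F x$ the mixed transitivity \labelcref{strlarstr} gives $z'\prec_F x$; thus a whole neighbourhood of $z$ is contained in $C^F(x)$. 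Finally $C^F(x)\neq\R^d$ because $F(x)\in N_{C^F(x)}(x)$ with $F(x)\neq 0$, whereas $N_{\R^d}(x)=\{0\}$.

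For item \labelcref{convex_normal}: I would first record the converse inclusion $\intr C^F(x)\subseteq\{z:z\prec_F x\}$ --- if $z\in\intr C^F(x)$, then $z+tF(z)\in C^F(x)$ for small $t>0$, hence $z+tF(z)\preceq_F x$, while $z\prec_F z+tF(z)$, so $z\prec_F x$ by \labelcref{strlarstr} --- which, with item \labelcref{convex_interior}, gives $\intr C^F(x)=\{z:z\prec_F x\}$ and $\partial C^F(x)=\{y:y\preceq_F x\}\setminus\{z:z\prec_F x\}$. Now fix $y\in\partial C^F(x)$ (nonempty, as $C^F(x)$ is a nonempty proper closed convex set). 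If some $w\in C^F(x)$ had $(w-y)\cdot F(y)>0$, then $y\prec_F w\preceq_F x$, so $y\prec_F x$ by \labelcref{strlarstr}, so $y\in\intr C^F(x)$ by item \labelcref{convex_interior}: a contradiction; hence $F(y)\in N_{C^F(x)}(y)$ and $\spn_+F(y)\subseteq N_{C^F(x)}(y)$, and in particular $C^F(x)\subseteq\{w:(w-y)\cdot F(y)\le 0\}$, so the tangent cone satisfies $T_{C^F(x)}(y)\subseteq\{v:v\cdot F(y)\le 0\}$. Conversely, for a unit vector $u$ with $u\cdot F(y)=-\delta<0$ and $r>0$, continuity of $F$ yields $\bigl(y-(y+ru)\bigr)\cdot F(y+ru)=r\bigl(\delta-u\cdot(F(y+ru)-F(y))\bigr)>0$ as soon as $\abs{F(y+ru)-F(y)}<\delta$; then $y+ru\prec_F y\preceq_F x$ gives $y+ru\prec_F x$, hence $y+ru\in C^F(x)$ by item \labelcref{convex_interior}. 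So every such $u$ belongs to $T_{C^F(x)}(y)$, and passing to the closure $\{v:v\cdot F(y)\le 0\}\subseteq T_{C^F(x)}(y)$. Therefore $T_{C^F(x)}(y)=\{v:v\cdot F(y)\le 0\}$ is a half-space, whose polar cone is exactly $\spn_+F(y)$, so $N_{C^F(x)}(y)=\spn_+F(y)$. As this holds at every boundary point, $C^F(x)$ admits a unique outer normal at each of them, i.e.\ it is a $C^1$ convex set (the Gauss map of a convex set being automatically continuous where it is single-valued).

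For item \labelcref{convex_unbounded}: assume, for contradiction, that $K:=C^F(x)$ is bounded; by items \labelcref{convex_interior}--\labelcref{convex_normal} it is then a compact $C^1$ convex body, and by item \labelcref{convex_normal} the continuous field $F$ equals along $\partial K$ a strictly positive multiple of the outer unit normal, hence points strictly outward on $\partial K$. But a continuous vector field on a compact convex body that points outward along the boundary must vanish at some interior point --- a standard consequence of Brouwer's fixed-point theorem (equivalently of the Poincaré--Hopf index theorem, the boundary being mapped by $F/\abs{F}$ onto the sphere with degree $1$). This contradicts \labelcref{hyp_nonzero}, so $K$ is unbounded. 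The main obstacle is exactly this last step: items \labelcref{convex_interior} and \labelcref{convex_normal} are soft consequences of the order relations of \Cref{transitivity_properties} together with the openness of the defining strict inequalities, whereas item \labelcref{convex_unbounded} is where a genuine (if classical) topological input is needed.
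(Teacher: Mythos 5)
Your proof is correct, but it diverges from the paper's in two of the three items, so let me compare. Item \labelcref{convex_interior} is essentially the paper's argument (the paper gets $C^F(x)\neq\R^d$ from $x\prec_F x+tF(x)$ rather than from $F(x)\in N_{C^F(x)}(x)$, but both are one-line). For item \labelcref{convex_normal}, the paper first proves $F(y)\in N_{C^F(x)}(y)$ by approximating $y$ with points $y_n\notin C^F(x)$ and passing to the limit, and then rules out a normal cone with two extreme rays by writing $\partial C^F(x)$ locally as a convex graph, approximating with boundary points having a unique normal (a.e.\ differentiability of convex functions), and using continuity of $F$; you instead establish the identity $\intr C^F(x)=\{z: z\prec_F x\}$ and compute the tangent cone directly, showing it is exactly the half-space $\{v: v\cdot F(y)\leq 0\}$ (one inclusion from $F(y)\in N_{C^F(x)}(y)$, obtained here more simply by noting $y\prec_F x$ would force $y\in\intr C^F(x)$; the other from the continuity of $F$ along the segment $y+ru$), and conclude by polarity. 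Your route is more self-contained and avoids the measure-theoretic input on convex graphs, at the cost of invoking the standard fact that a closed convex set with a unique outer normal at every boundary point has $C^1$ boundary — which is in substance the same regularity fact the paper uses. For item \labelcref{convex_unbounded} the two proofs are genuinely different: the paper runs an order-theoretic compactness argument (if $C^F(x)$ were bounded, the sets $C^F(z)$, $z\preceq_F x$, are compact and nested along chains, so Zorn's lemma yields a $\preceq_F$-minimal point, contradicting the nonemptiness of $\{z: z\prec_F\cdot\}$ from item \labelcref{convex_interior}), whereas you use a topological argument: by item \labelcref{convex_normal} the field is a positive multiple of the outer normal on the boundary of the would-be compact body, and an outward-pointing continuous field on a compact convex body must vanish inside (Brouwer/degree), contradicting \labelcref{hyp_nonzero}. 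Both are valid; the paper's version is more elementary (no degree theory) and only uses item \labelcref{convex_interior} plus closedness, while yours is geometrically transparent but leans on the $C^1$ conclusion of item \labelcref{convex_normal} and a classical fixed-point/index theorem.
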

\begin{proof}
Let us consider $z_t \coloneqq x - t F(x)$ where $t \in (0,1)$. For $t$ small enough, by continuity of $F$ we have
\[(x-z_t) \cdot F(z_t) = t F(x) \cdot F(z_t) > 0,\]
thus $z_t \prec x$ and $\{z : z \prec x\}  \neq \emptyset$. Now take any $z \prec x$: there is an ascending path $z = x_0, x_1, \ldots, x_n = x$, i.e. such that $F(x_i)\cdot (x_{i+1}-x_i) > 0$ for every $i\in\{0,\ldots,n-1\}$. By continuity of $F$, there exists a small $\eps > 0$ such that for every $z' \in B_\eps(z)$,
\[(x_1 - z') \cdot F(z') > 0,\]
thus the path $z', x_1, \ldots, x_n = x$ is an ascending path and $z' \prec x$. Finally, since for $t>0$, $x \prec (x+tF(x))$, $x + tF(x) \not\in C^F(x)$ thus $C^F(x) \neq \R^d$. We have proved \labelcref{convex_interior}.

To prove \labelcref{convex_normal}, notice first that, by the discussion above, $x\in \partial C^F(x)$ which is, then, not empty. Then take a point $y\in \partial C^F(x)$ and consider a sequence $y_n \not\in C^F(x)$ converging to $y$ as $n\to+\infty$. Necessarily,
\[\forall z \in C^F(x), \quad (z-y_n) \cdot F(y_n) \leq 0\]
otherwise we would have $y_n \prec z$ and since $z \preceq x$ then $y_n \prec x$ which contradicts $y_n \not\in C^F(x)$. We take the limit $n \to +\infty$ so as to get 
\begin{equation}\label{convex_negative_scalar_product}
(z-y)\cdot F(y) \leq 0.
\end{equation}
Therefore $F(y) \in N_{C^F(x)}(y)$. Let us prove by contradiction that $N_{C^F(x)}(y)$ is a half line at every $y\in \partial C^F(x)$, which will thus show the result. Let $y\in \partial C^F(x)$ be such that there exists two unit normal vectors $\nu,\nu' \in N_{C^F(x)}(y)$ which generate extremal rays. Since $C_x$ has non empty interior, the boundary of $C^F(x)$ is locally the graph of a convex (or concave) function $g: \R^{d-1} \to \R$. By the regularity of convex functions, there exist two sequences of points $(y_n), (y_n') \subseteq \partial C^F(x)$ where there exists unique unit normal vectors $\nu_n, \nu'_n$ and such that $\nu_n \to \nu$, $\nu'_n \to \nu'$. In that case, by what we just proved and thanks to \labelcref{hyp_nonzero}, for every $n\in\N$ we have
\[\nu_n = \frac{F(y_n)}{\abs{F(y_n)}} \quad\text{and}\quad \nu'_n = \frac{F(y'_n)}{\abs{F(y'_n)}},\]
and by continuity of $F$,
\[\frac{F(y)}{\abs{F(y)}} = \lim_{n\to+\infty} \frac{F(y_n)}{\abs{F(y_n)}} = \nu \neq \nu' = \lim_{n\to+\infty} \frac{F(y'_n)}{\abs{F(y'_n)}} = \frac{F(y)}{\abs{F(y)}},\]
which is a contradiction. This shows that $F/\abs{F}$ is a the unit normal field at the boundary of $C^F(x)$ and since $F$ is if class $C^0$ then, again writing that $\partial C^F(x)$ is locally the graph of a convex function, we conclude that $C^F(x)$ is of class $C^1$, concluding the proof of \labelcref{convex_normal}.

Let us show that $C^F(x)$ is unbounded. We argue by contradiction, assuming that it is bounded. In that case the set $C^F(x) = \{z : z\preceq_F x\}$ equipped with $\preceq_F$ is inductive: if a subset $V$ of $C^F(x)$ is totally ordered, the set $U \coloneqq \bigcap_{z\in U} C^F(z)$ is nonempty as the intersection of totally ordered nonempty compact sets, and any $z\in U$ is a minorant of $V$. Zorn's lemma then guarantees the existence of a minimal element $z^*$ in $C^F(x)$. By \labelcref{convex_interior} the set $\{z : z\prec z^*\} \subseteq E$ is nonempty: a contradiction to the minimality of $z^*$.
\end{proof}

We proceed by showing that the convex sets in the family $\C^F$ always have a non-empty pairwise intersection.

\begin{lem}\label{convex_sets_intersect}
Assume that $F$ is a cyclically quasi-monotone and continuous field that satisfies \labelcref{hyp_nonzero}. For every $x,x' \in \R^d$, $\intr(C^F(x)) \cap \intr(C^F(x')) \neq \emptyset$.
\end{lem}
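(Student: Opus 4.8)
The plan is to argue by contradiction and to reduce, thanks to the normal-cone description furnished by \Cref{convex_prop_cont_vanish}, to an elementary projection argument. First I would record a preliminary identification that is used repeatedly: for every $x\in\R^d$,
\[\intr C^F(x)=\{z:z\prec_F x\}.\]
The inclusion $\{z:z\prec_F x\}\subseteq\intr C^F(x)$ is exactly \Cref{convex_prop_cont_vanish} \labelcref{convex_interior}. For the converse, if $z\in\intr C^F(x)$ then $z+sF(z)\in C^F(x)$ — hence $z+sF(z)\preceq_F x$ — for $s>0$ small, while the one-edge path $z\to z+sF(z)$ is $F$-ascending because $F(z)\cdot\bigl((z+sF(z))-z\bigr)=s\abs{F(z)}^2>0$ (here \labelcref{hyp_nonzero} is used), so $z\prec_F z+sF(z)$, and the mixed transitivity \labelcref{strlarstr} gives $z\prec_F x$.

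Now suppose, for contradiction, that $\intr C^F(x)\cap\intr C^F(x')=\emptyset$ for some $x,x'\in\R^d$. Since $\intr C^F(x')$ is open and nonempty (\Cref{convex_prop_cont_vanish} \labelcref{convex_interior}), it cannot be contained in $C^F(x)$: otherwise, being open, it would be contained in $\intr C^F(x)$, contradicting disjointness. Hence there is $q\in\intr C^F(x')\setminus C^F(x)$. Let $y^\ast$ be the orthogonal projection of $q$ onto the nonempty closed convex set $C^F(x)$ (which exists even though $C^F(x)$ is unbounded, by the usual argument after intersecting with a ball). Since $q\notin C^F(x)$ we have $y^\ast\in\partial C^F(x)$ and $q-y^\ast$ is a nonzero element of $N_{C^F(x)}(y^\ast)$, which by \Cref{convex_prop_cont_vanish} \labelcref{convex_normal} equals $\spn_+F(y^\ast)$; using \labelcref{hyp_nonzero} once more, $q-y^\ast=\lambda F(y^\ast)$ with $\lambda>0$. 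Therefore $F(y^\ast)\cdot(q-y^\ast)=\lambda\abs{F(y^\ast)}^2>0$, i.e.\ $y^\ast\prec_F q$. As $q\in\intr C^F(x')=\{z:z\prec_F x'\}$ we have $q\prec_F x'$, so transitivity of $\prec_F$ yields $y^\ast\prec_F x'$, that is $y^\ast\in\intr C^F(x')$; on the other hand $y^\ast\in C^F(x)$ means $y^\ast\preceq_F x$.

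Finally I would perturb along $F$: consider $y_\tau\coloneqq y^\ast-\tau F(y^\ast)$ for $\tau>0$ small. The one-edge path $y_\tau\to y^\ast$ is $F$-ascending, since $F(y_\tau)\cdot(\tau F(y^\ast))\to\tau\abs{F(y^\ast)}^2>0$ as $\tau\to 0^+$ by continuity of $F$, so $y_\tau\prec_F y^\ast$; combined with $y^\ast\preceq_F x$ and \labelcref{strlarstr} this gives $y_\tau\prec_F x$, hence $y_\tau\in\intr C^F(x)$ by the preliminary identification. Since moreover $\intr C^F(x')$ is open and contains $y^\ast$, we also get $y_\tau\in\intr C^F(x')$ once $\tau$ is small enough. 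Thus $y_\tau\in\intr C^F(x)\cap\intr C^F(x')$, contradicting the assumption, which proves the lemma.

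Regarding the main obstacle: once \Cref{convex_prop_cont_vanish} is available there is no hard analytic step, and in particular Hahn--Banach separation is not needed; the crux is purely to choose the right object, namely a point $q$ of $\intr C^F(x')$ lying outside $C^F(x)$, and to observe that its metric projection onto $C^F(x)$ is joined to $q$ by an $F$-ascending edge precisely because of the identity $N_{C^F(x)}(y^\ast)=\spn_+F(y^\ast)$. The only points needing a line of care are that the projection exists despite unboundedness of $C^F(x)$, and that the perturbed point $y_\tau$ lands simultaneously in both open sets — immediate from the preliminary identification on one side and from openness on the other.
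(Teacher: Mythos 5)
Your proof is correct, and it takes a genuinely different and leaner route than the paper's. The paper proceeds in two stages: it first shows $C^F(x)\cap C^F(x')\neq\emptyset$ by contradiction, splitting into the case where $d(C^F(x),C^F(x'))$ is attained (a double-projection argument producing a $2$-cycle violating quasi-monotonicity) and the case where it is not, which is handled with Ekeland's variational principle and a subdifferential sum rule; it then upgrades nonempty intersection to intersection of the interiors via a tangency argument (a common boundary point would carry two opposite normals, both positively spanned by $F$). You bypass the case distinction, Ekeland, and the tangency step entirely: assuming the interiors are disjoint, you pick $q\in\intr C^F(x')\setminus C^F(x)$ (which exists since $\intr C^F(x')$ is open and nonempty), project it onto the closed convex set $C^F(x)$, and use the identity $N_{C^F(x)}(y^\ast)=\spn_+F(y^\ast)$ from \Cref{convex_prop_cont_vanish} to read the projection direction as an $F$-ascending edge $y^\ast\prec_F q$; the backward perturbation $y_\tau=y^\ast-\tau F(y^\ast)$ then lands in $\intr C^F(x)$ by \labelcref{strlarstr} and in $\intr C^F(x')$ by openness, giving the contradiction directly. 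What your approach buys is the elimination of the only genuinely technical ingredient of the paper's proof (the variational principle for the non-attained case), at the cost of nothing beyond the projection theorem and \Cref{convex_prop_cont_vanish}; your preliminary identification $\intr C^F(x)=\{z:z\prec_F x\}$ is correct as you prove it (and in fact only the inclusion $\{z:z\prec_F x\}\subseteq\intr C^F(x)$ plus mixed transitivity is strictly needed). One phrasing slip worth fixing: write the perturbation estimate as $F(y_\tau)\cdot F(y^\ast)\to\abs{F(y^\ast)}^2>0$ as $\tau\to0^+$, so that $F(y_\tau)\cdot(y^\ast-y_\tau)=\tau\,F(y_\tau)\cdot F(y^\ast)>0$ for $\tau$ small, rather than the limit statement you wrote, which as stated conflates the two.
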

\begin{proof}
We start by showing that $C^F(x) \cap C^F(x') \neq \emptyset$ for every $x,x'\in \R^d$. We argue by contradiction, assuming that for some $x\neq x'$, $C^F(x) \cap C^F(x') = \emptyset$. We distinguish two cases.

\paragraph{Case 1: $d(C^F(x),C^F(x'))$ is attained.} Let $y \in C^F(x), y' \in C^F(x')$ such that
\[d(C^F(x),C^F(x')) = \abs{y-y'} > 0.\]
We know that $y' = p_{C^F(x')}(y)$ and $y = p_{C^F(x)}(y')$, and since by \labelcref{convex_normal} of \Cref{convex_prop_cont_vanish} $F$ is a normal field to $C^F(x)$ and $C^F(x')$, we have:
\[y'-y \in N_{C^F(x)}(y) = \spn_+(F(y)) \quad\text{and}\quad y-y' \in N_{C^F(x')}(y') = \spn_+(F(y')).\]
As a consequence $(y'-y)\cdot F(y) > 0$ and $(y-y')\cdot F(y') > 0$: a contradiction the quasi-monotonicity of $F$.

\paragraph{Case 2: $d(C^F(x),C^F(x'))$ is not attained.} We shall use Ekeland's variational principle (see \cite{ekelandNonconvexMinimizationProblems1979}). Let us define $\Phi = \Phi_1 + \Phi_2$ where $\Phi_1,\Phi_2: \R^d \times \R^d \to [0,+\infty]$ are given by
\[\Phi_1(Y) = \abs{y-y'}, \qquad \Phi_2(Y) = \chi_{C^F(x)\times C^F(x')}(y,y'),\]
for every $Y=(y,y') \in \R^d \times \R^d$. The function $\Phi$ is a proper and lower semi-continuous convex functions on $\R^d \times \R^d$ and it is bounded from below (by $0$). For every $\eps > 0$ we may apply Ekeland's variational principle to obtain a point $Y_* = (y_*,y_*')$ which satisfies in particular:
\begin{align*}
    \forall\, Y\neq Y_*, \quad \Phi(Y) + \eps \abs{Y-Y_*} > \Phi(Y_*).
\end{align*}
In particular, $Y_*$ is a minimizer of the map $\Phi + \Phi_3$ where $\Phi_3 :  Y \mapsto \eps\abs{Y-Y_*}$. The subgradients can be easily computed:
\begin{align*}
   \partial \Phi_1(y,y') &=\left\{ \left(\frac{y-y'}{\abs{y-y'}},-\frac{y-y'}{\abs{y-y'}}\right)\right\}\qquad (\forall y\neq y'),\\
 \partial \chi_{C^F(x)\times C^F(x')}(y,y') &= \partial \chi_{C^F(x)}(y) \times \partial \chi_{C^F(x')}(y') \qquad (\forall y,y'),\\
\partial \Phi_3(Y_*) &= \eps \sph^{2d-1}.
\end{align*}
Here $\sph^{2d-1}$ denotes the unit sphere in $\R^{2d}$. Besides, 
\[\bigcap_{i\in\{1,2,3\}}\intr(\dom \Phi_i) = \intr(\dom\Phi_2) = \intr(C^F(x)) \times \intr (C^F(x')) \neq \emptyset,\]
thus the sub-gradient of $\Phi_1+\Phi_2+\Phi_3$ is the sum of the sub-gradients of the three functions. Since $C^F(x) \cap C^F(x') = \emptyset$, $y_* \neq y_*'$, and and the first-order optimality condition yields $0 \in \partial \Phi(Y_*)$, i.e. the the existence of $(\nu,\nu') \in \partial \chi_{C^F(x)}(y_*)\times \partial \chi_{C^F(x')}(y_*')$ and $(u,u') \in \sph^{2d-1}$ such that
\[ 0 = \begin{pmatrix}\displaystyle \frac{y_*-y_*'}{\abs{y_*-y_*'}} + \eps u + \nu\\ \displaystyle \frac{y_*'-y_*}{\abs{y_*'-y_*}} + \eps u' + \nu'
\end{pmatrix}.\]
As a consequence for $\eps < 1/2$,
\[ \nu \cdot \frac{y_*'-y_*}{\abs{y_*'-y_*}} = 1 - \varepsilon u \cdot \nu \geq \frac 12 > 0\]
and similarly
\[ \nu' \cdot \frac{y_*-y_*'}{\abs{y_*-y_*'}} > 0.\]
Since $\nu \in \spn_+ F(y_*)$ and $\nu'\in\spn_+ F(y_*')$, we obtain $y_* \prec y'_* \prec y_*$: a contradiction to the quasi-monotonicity of $F$. We have thus shown that $C^F(x) \cap C^F(x') \neq \emptyset$ for every $x,x'\in \R^d$.

Now let us prove that they always intersect in their interior. Assume by contradiction that $\intr(C^F(x)) \cap \intr(C^F(x')) = \emptyset$ for some $x\neq x'$. This implies that $\intr(C^F(x)) \cap \partial C^F(x') = \intr(C^F(x')) \cap \partial C^F(x) = \emptyset$. Indeed if, for example $\intr(C^F(x)) \cap \partial C^F(x') \neq \emptyset$, since $C^F(x') = \overline{\intr(C^F(x'))}$ then $\intr(C^F(x)) \cap \intr( C^F(x')) \neq \emptyset$. Since $C^F(x) \cap C^F(x') \neq \emptyset$ we must have $\partial C^F(x) \cap \partial C^F(x') \neq \emptyset$. Take a point $y \in \partial C^F(x) \cap \partial C^F(x')$: since the convex sets intersect at $y$ but their interior have empty intersection, they are tangent at $y$, and because they are of class $\mathscr C^1$, by \Cref{convex_prop_cont_vanish}, they have opposite normal at $y$. By the same lemma the normal to both convex sets is positively spanned by $F(y)$, hence a contradiction.
\end{proof}

In order to show that the convex sets of the family $\C^F$ are totally ordered, we need a final perturbation lemma.

\begin{lem}\label{lemma_perturbation}
Let $F : \R^d \to \R^d$ be a cyclically quasi-monotone map satisfying \cref{hyp_nonzero} and assume that $F$ is locally Lipschitz continuous. Let $p\in \R^d$, $K$ a compact subset of $\partial C(p)$ and $T > 0$. For every $\eps > 0$, there exists $\delta_0= \delta_0(\eps,F,K,T) > 0$ satisfying the following property: for every $0< \delta \leq \delta_0$ and every tuple of points $(x_0,\ldots,x_n) \subseteq K$ such that
\[n\delta \leq 2 T \qquad \text{and}\qquad \abs{x_{i+1}-x_i} = \delta \qquad (\forall i\in \{0,\ldots,n-1\}),\]
there exists a perturbed family $(\tilde x_0,\ldots, \tilde x_n) \subseteq \R^d$ such that
\begin{enumerate}[(i)]
\item\label{perturbed_initial} $\tilde x_0 = x_0$,
\item\label{perturbed_is_close} $\abs{\tilde x_i-x_i} \leq \eps$ for every $i\in\{0,\ldots,n\}$,
\item\label{perturbed_is_increasing} $(\tilde x_{i+1}-\tilde x_i) \cdot F(\tilde x_i) > 0$ for every $i\in \{0, \ldots,n-1\}$.
\end{enumerate}
\end{lem}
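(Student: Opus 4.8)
The plan is to perturb the chain by pushing each point slightly off $\partial C(p)$, in the outer normal direction $\nu(x_i):=F(x_i)/\abs{F(x_i)}$ singled out by \Cref{convex_prop_cont_vanish} \labelcref{convex_normal}, by an amount that grows \emph{geometrically} along the chain. I first fix constants. Since $K$ is compact and $F$ is locally Lipschitz and nowhere zero, there is a compact convex neighbourhood $W$ of $K$ (for instance $\overline{\mathrm{conv}(K)+\bar B_1}$) on which $F$ is $L$-Lipschitz and $0<m\le\abs{F}\le M$, and on which $\nu=F/\abs{F}$ is then $L':=2L/m$-Lipschitz; set $C_1:=ML'$ and $K_0:=e^{8LT/m}$, and assume $\eps\le 1$ without loss of generality. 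The quantitative input I would use is a \emph{flatness estimate}: for $x,x'\in K\subseteq\partial C(p)$ with $\abs{x-x'}=\delta$, both $F(x)$ and $F(x')$ lie in the normal cone of the closed convex set $C(p)$ at $x$ and at $x'$ (again by \Cref{convex_prop_cont_vanish} \labelcref{convex_normal}), so $(x'-x)\cdot\nu(x)\le 0\le(x'-x)\cdot\nu(x')$, whence $(x'-x)\cdot\nu(x)\ge-(x'-x)\cdot(\nu(x')-\nu(x))\ge-L'\delta^2$, and multiplying by $\abs{F(x)}\le M$ gives $(x'-x)\cdot F(x)\ge-C_1\delta^2$.

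I then claim the statement holds with $\delta_0:=\dfrac{m}{2MK_0}\min\!\bigl(\eps,1,\tfrac{m}{4L}\bigr)$, which only depends on $\eps,F,K,T$. Given a chain $(x_0,\dots,x_n)\subseteq K$ with $\abs{x_{i+1}-x_i}=\delta\le\delta_0$ and $n\delta\le 2T$, I would set
\[\rho:=1+\tfrac{4L\delta}{m},\qquad a_i:=a_0\rho^i\ \text{ with }\ a_0:=\tfrac{4C_1\delta^2}{m},\qquad v_i:=\sum_{j=0}^{i-1}a_j\,\nu(x_j),\qquad \tilde x_i:=x_i+v_i.\]
Then $\tilde x_0=x_0$, which is \labelcref{perturbed_initial}; and since $\rho^n\le(1+\tfrac{4L\delta}{m})^{2T/\delta}\le K_0$,
\[\abs{\tilde x_i-x_i}\le\sum_{j=0}^{i-1}a_j\le a_0\,\frac{\rho^n-1}{\rho-1}\le\frac{a_0K_0}{\rho-1}=\frac{C_1K_0}{L}\,\delta\le\min\!\bigl(\eps,1,\tfrac{m}{4L}\bigr),\]
which gives \labelcref{perturbed_is_close} and also keeps every $\tilde x_i$ inside $W$, so that the Lipschitz bounds are available.

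For \labelcref{perturbed_is_increasing} I would expand, using $v_{i+1}-v_i=a_i\nu(x_i)$,
\[(\tilde x_{i+1}-\tilde x_i)\cdot F(\tilde x_i)=(x_{i+1}-x_i)\cdot F(x_i)+a_i\abs{F(x_i)}+(x_{i+1}-x_i+v_{i+1}-v_i)\cdot\bigl(F(\tilde x_i)-F(x_i)\bigr),\]
and bound: the first term by $-C_1\delta^2$ (flatness estimate), the second below by $a_im$, and the third in absolute value by $(\delta+a_i)L\abs{v_i}$. Since $\abs{v_i}\le a_0\tfrac{\rho^i-1}{\rho-1}<\tfrac{a_i}{\rho-1}=\tfrac{ma_i}{4L\delta}$ and $\abs{v_i}\le\tfrac{m}{4L}$ (previous display), while $C_1\delta^2=\tfrac14 a_0 m\le\tfrac14 a_i m$, each of $C_1\delta^2$, $\delta L\abs{v_i}$ and $a_iL\abs{v_i}$ is at most $\tfrac14 a_i m$, so $(\tilde x_{i+1}-\tilde x_i)\cdot F(\tilde x_i)\ge a_i m-\tfrac34 a_i m=\tfrac14 a_i m>0$.

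The main obstacle, and the reason a constant (or too slowly growing) displacement fails, is that each perturbation feeds a Lipschitz error of size $\sim\delta L\abs{v_i}$ back into $(\tilde x_{i+1}-\tilde x_i)\cdot F(\tilde x_i)$, and over the $\sim T/\delta$ steps of the chain these errors would outrun the gain $a_im$ unless the displacement grows fast enough. The ratio $\rho=1+4L\delta/m$ is chosen so that, on one hand, $\delta L/(\rho-1)=m/4$ — the per-step gain dominates the per-step error — and, on the other hand, $\rho^{\,2T/\delta}\le e^{8LT/m}$ stays bounded \emph{independently of $\delta$}, so the accumulated displacement stays $\le\eps$ once $\delta$ is small. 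The remaining care, which I expect to be purely routine, is to check that all perturbed points stay in the neighbourhood $W$ on which the Lipschitz constants hold — this is why $\abs{\tilde x_i-x_i}\le 1$ is also imposed.
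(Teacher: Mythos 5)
Your proof is correct, and its core mechanism is the same as the paper's: push each point off $\partial C(p)$ along the direction of $F$ by amounts growing geometrically along the chain, with the ratio tuned so that the per-step gain $\sim a_i m$ dominates both the quadratic flatness defect $O(\delta^2)$ and the Lipschitz error fed back by the accumulated displacement, while $n\delta\le 2T$ keeps the total displacement $O(\delta)$ uniformly in $n$. Two genuine differences are worth recording. First, the paper obtains the flatness estimate $\abs{(x_{i+1}-x_i)\cdot F(x_i)}\le A\delta^2$ by writing $\partial C(p)$ locally as the graph of a $C^{1,1}$ convex function; you derive the one-sided bound (which is all that is needed) directly from the two normal-cone inequalities $(x'-x)\cdot\nu(x)\le 0\le (x'-x)\cdot\nu(x')$ supplied by \Cref{convex_prop_cont_vanish}~\labelcref{convex_normal}, combined with the Lipschitz continuity of $\nu=F/\abs{F}$ on a compact neighbourhood; this is more elementary and bypasses the $C^{1,1}$-graph argument entirely. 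Second, the paper perturbs each point by a single vector $\eps_i F(x_{i-1})$ along the previous normal, whereas you displace by the cumulative sum $v_i=\sum_{j<i}a_j\,\nu(x_j)$; this changes the bookkeeping (your error term is $(\delta+a_i)L\abs{v_i}$ instead of the paper's $2L\delta\eps_i$, and your seed is $a_0\sim\delta^2$ rather than $\eps_1\sim\delta$) but not the structure, and your fully explicit $\delta_0(\eps,F,K,T)$ is a pleasant by-product. Only cosmetic caveat: take $L>0$ (always possible, since any larger constant is still a Lipschitz constant on $W$), otherwise $\rho=1$ and $a_0=0$ and the construction degenerates.
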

\begin{proof}
Let $\eps > 0$. First of all, let us notice that thanks to \cref{hyp_nonzero}, up to replacing $F$ with $F/\abs{F}$, we may assume that $\abs{F(x)} = 1$ for every $x\in \R^d$.

We can write, locally, as in \Cref{convex_prop_cont_vanish} above, $\partial C(p)$ as the graph of a function which is $C^{1,1}$, since in this lemma $F$ is locally Lipschitz. Then we obtain that for $\delta > 0$ small enough and some constant $A$, it holds:
\begin{equation*}
    \forall x,y \in K, \quad \abs{y-x} \leq \delta \implies d(y,\tanspace_x C^F(p)) \leq A \abs{y-x}^2.
\end{equation*}
Notice that by \Cref{convex_prop_cont_vanish}, $\tanspace_x C^F(p) = x + F(x)^\perp$ thus this is equivalent to
\begin{equation}\label{angle_inequality}
    \forall x,y \in K, \quad \abs{y-x} \leq \delta \implies \abs{(y-x)\cdot F(x)} \leq A \abs{y-x}^2.
\end{equation}

Let us take a tuple of points $(x_0,\ldots, x_n) \subseteq K$ as in the statement:
\[\abs{x_{i+1}-x_i} =\delta \qquad (\forall i\in \{0,\ldots,n-1\}).\]
We define the perturbed tuple $(\tilde x_i)_{0\leq i \leq n}$ as follows:
\[\left\{\begin{aligned}
\tilde x_0 &= x_0\\
\tilde x_{i} &= x_{i} + \eps_i F(x_{i-1}) \qquad (0< i \leq n)
\end{aligned}\right.,\]
for a suitable tuple $(\eps_1,\ldots,\eps_n)$ of values in $(0,1)$ to be chosen later.

Let us estimate from below the scalar product $F(\tilde x_i) \cdot (\tilde x_{i+1}-\tilde x_i)$ for $i=0$ and $1<i < n$:
\begin{equation}\label{perturbed_sequence_lower_bound_0}
\begin{aligned}
    (\tilde x_1-\tilde x_0) \cdot F(\tilde x_0) &= (x_1 + \eps_1 F(x_0) - x_0) \cdot F(x_0)\\
    &= (x_1-x_0) \cdot F(x_0) + \eps_1 \geq -A\delta^2 + \eps_1,
    \end{aligned}
\end{equation}
and
\begin{equation}\label{perturbed_sequence_lower_bound}
\begin{aligned}
    &\qquad (\tilde x_{i+1}-\tilde x_i) \cdot F(\tilde x_i)\\
    &= (x_{i+1}-x_i)\cdot F(\tilde x_i) + (\eps_{i+1} F(x_{i}) - \eps_i F(x_{i-1}))\cdot F(\tilde x_i)\\
    &= (x_{i+1}-x_i)\cdot F(\tilde x_i) + (\eps_{i+1}-\eps_i) F(x_{i})\cdot F(\tilde x_i) + \eps_i (F(x_{i})-F(x_{i-1}))\cdot F(\tilde x_i)\\
    &= \begin{multlined}[t]
    (x_{i+1}-x_i) \cdot F(x_i) + (x_{i+1}-x_i) \cdot (F(\tilde x_i)-F(x_i))\\
    + (\eps_{i+1}-\eps_i) F(x_{i})\cdot F(\tilde x_i)+ \eps_i (F(x_{i})-F(x_{i-1}))\cdot F(\tilde x_i).
    \end{multlined}
\end{aligned}
\end{equation}

We may lower bound the last four terms of \labelcref{perturbed_sequence_lower_bound} using \labelcref{angle_inequality} (for the first term), the fact that $F$ is Lipschitz on $K + B(0,1)$ with constant $L$, so as to obtain:
\begin{equation}\label{lower_bound_perturbed}
    (\tilde x_{i+1}-\tilde x_i) \cdot F(\tilde x_i) \geq -A\delta^2 - 2L\delta \eps_i + (\eps_{i+1}-\eps_i) (1-L\eps_i).
\end{equation}
To get a positive scalar product in \labelcref{perturbed_sequence_lower_bound_0} and \labelcref{perturbed_sequence_lower_bound} we define $(\eps_i)_{1\leq i\leq n}$ as follows:
\[\left\{\begin{aligned}
\eps_1 &= \frac{2A}{L} \delta\\
\eps_{i+1} &= \left(1 + \frac{3L \delta}{1-ML\delta}\right) \eps_i \qquad (0< i \leq n)
\end{aligned}\right..\]
The constant $M > 0$ will be chosen (large) later on, and $\delta_0$ will also be chosen small enough so as to ensure $M L\delta \leq 1/2$ for every $\delta \leq \delta_0$. Obviously for every $i$,
\begin{equation}\label{upper_bound_eps_i_1}
\begin{aligned}
    \eps_{i} \leq \eps_n = \left(1 + \frac{3L \delta}{1-ML\delta}\right)^n \eps_1 &\leq \exp\left(n\log\left(1 + \frac{3L \delta}{1-ML\delta}\right)\right) \frac{2A}{L} \delta\\
    &\leq \frac{2A}L\exp\left(\frac{3LT}{1-M L\delta_0}\right) \delta,
\end{aligned}
\end{equation}
where we have used the concavity of the logarithm and the bound $n\delta \leq 2T$. Now we choose $M > 0$ large enough so that
\[
\frac{2A}L\exp\left({6LT}\right) \leq M
\]
then $\delta_0 >0$ small enough so that \labelcref{angle_inequality} holds for every $0 < \delta \leq \delta_0$ and so that
\begin{equation}\label{delta0_small}
    M L\delta_0 \leq 1/2\qquad \text{and}\qquad \delta_0 M \leq \eps.
\end{equation}
With these choices of $M$ and $\delta_0$ and resuming from \labelcref{upper_bound_eps_i_1} we have for every $i$:
\begin{equation}\label{upper_bound_eps_i_2}
    \eps_i \leq  \frac{2A}L\exp\left(\frac{3LT}{1-M L\delta_0}\right) \delta \leq \frac{2A}L\exp\left({6LT}\right) \delta \leq M\delta.
\end{equation}
Now from the induction relation satisfied by $\eps_i$ we have:
\[\eps_{i+1} = \left(1 + \frac{3L \delta}{1-ML\delta}\right) \eps_i \geq  \left(1 + \frac{3L \delta}{1-L\eps_i}\right) \eps_i,\]
which is equivalent to the relation
\[(\eps_{i+1}-\eps_i)(1-L\eps_i) \geq 3L \delta \eps_i\]
and putting this into \labelcref{lower_bound_perturbed} we obtain:
\begin{align*}
    (\tilde x_{i+1}-\tilde x_i) \cdot F(\tilde x_i) &\geq -A\delta^2 - 2L\delta \eps_i + +3L\delta \eps_i\\
    &\geq -A\delta^2 + L\delta\eps_1\\
    &= -A\delta^2 + 2A\delta^2\\
    &= A\delta^2 > 0.
\end{align*}
We have thus shown \labelcref{perturbed_is_increasing}. Besides, by \labelcref{upper_bound_eps_i_2} and \labelcref{delta0_small} we have $\eps_i \leq M\delta \leq \eps$ for every $\delta\leq \delta_0$, and \labelcref{perturbed_is_close} holds true.
\end{proof}

\begin{prp}\label{ND-lipschitz}
Let $F : \R^d \to \R^d$ be a locally Lipschitz cyclically quasi-monotone vector field satisfying \labelcref{hyp_nonzero}. The family of convex sets $\C^F$ is totally ordered.
\end{prp}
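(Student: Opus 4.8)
The plan is to argue by contradiction. Suppose $\C^F$ is not totally ordered and pick $x_0,x_1\in\R^d$ with $C^F(x_0)$ and $C^F(x_1)$ incomparable; I will build an $F$-ascending path from $x_0$ to a point lying in $C^F(x_1)$, which forces $x_0\prec_F x_1$ and hence $x_0\in C^F(x_1)$, a contradiction. First note that $x_0\in\partial C^F(x_0)$ (\Cref{convex_prop_cont_vanish}) and $x_0\notin C^F(x_1)$: otherwise $x_0\preceq_F x_1$, whence $C^F(x_0)\subseteq C^F(x_1)$ by transitivity of $\preceq_F$, against incomparability. Next, $C^F(x_1)$ is convex with nonempty interior (\Cref{convex_prop_cont_vanish}), so it equals $\overline{\intr C^F(x_1)}$ and therefore $\intr C^F(x_1)\not\subseteq C^F(x_0)$ (else $C^F(x_1)\subseteq C^F(x_0)$); fix $w\in\intr C^F(x_1)\setminus C^F(x_0)$. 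By \Cref{convex_sets_intersect} there is $q\in\intr C^F(x_0)\cap\intr C^F(x_1)$, and the segment $[q,w]$ stays inside the convex open set $\intr C^F(x_1)$ while running from $\intr C^F(x_0)$ out of $C^F(x_0)$; it therefore meets $\partial C^F(x_0)$ at a point $b\in\partial C^F(x_0)\cap\intr C^F(x_1)$.

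Now I use the perturbation lemma to travel from $x_0$ to (near) $b$ while staying on $\partial C^F(x_0)$. Since $F$ is locally Lipschitz, \Cref{convex_prop_cont_vanish} shows $\partial C^F(x_0)$ is a $C^{1,1}$ hypersurface; join $x_0$ to $b$ by a rectifiable path $\Gamma\subseteq\partial C^F(x_0)$, let $K$ be its compact image, and pick $T$ with $\len(\Gamma)\le 2T$. Choose $\eps>0$ with $B(b,2\eps)\subseteq\intr C^F(x_1)$, let $\delta_0=\delta_0(\eps,F,K,T)$ be as in \Cref{lemma_perturbation}, and shrink it so that $\delta_0\le\eps$. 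For a small $\delta\le\delta_0$, place points $x_0=z_0,z_1,\dots,z_n$ along $\Gamma$ with $\abs{z_{i+1}-z_i}=\delta$ and $z_n$ within $\delta$ of $b$; then $(z_0,\dots,z_n)\subseteq K$ and $n\delta\le\len(\Gamma)\le 2T$, so \Cref{lemma_perturbation} yields $\tilde z_0=x_0,\tilde z_1,\dots,\tilde z_n$ with $\abs{\tilde z_i-z_i}\le\eps$ and $(\tilde z_{i+1}-\tilde z_i)\cdot F(\tilde z_i)>0$ for all $i$, i.e. an $F$-ascending path from $x_0$ to $\tilde z_n$, so $x_0\prec_F\tilde z_n$. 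Since $\abs{\tilde z_n-b}\le\abs{\tilde z_n-z_n}+\abs{z_n-b}\le2\eps$, we get $\tilde z_n\in B(b,2\eps)\subseteq C^F(x_1)$, hence $\tilde z_n\preceq_F x_1$; the mixed transitivity \labelcref{strlarstr} then gives $x_0\prec_F x_1$ and \labelcref{strict_implies_large} gives $x_0\preceq_F x_1$, i.e. $x_0\in C^F(x_1)$ — contradiction. Hence $\C^F$ is totally ordered.

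I expect the main obstacle to be precisely the step that invokes \Cref{lemma_perturbation}: that lemma only manufactures ascending chains out of chains lying on one fixed boundary $\partial C^F(p)$, so one genuinely needs to join $x_0$ and $b$ inside the hypersurface $\partial C^F(x_0)$. This is fine when $\partial C^F(x_0)$ is connected, but it fails exactly when $C^F(x_0)$ is a slab between two parallel hyperplanes; this degenerate configuration must be ruled out by a separate argument, e.g. using that $x_0$ lies on one of the two bounding hyperplanes together with the structural constraint $w\in C^F(x)\Rightarrow C^F(w)\subseteq C^F(x)$ and \Cref{convex_sets_intersect}. The remaining technicalities — producing the rectifiable path $\Gamma$, realizing the exact spacing $\delta$ along it, and tracking the uniform constants — are routine.
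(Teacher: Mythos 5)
Your argument is essentially the paper's own proof: assuming incomparability, you use \Cref{convex_sets_intersect} to locate a point of $\partial C^F(x_0)$ lying in the interior of $C^F(x_1)$, join it to $x_0$ by a curve inside $\partial C^F(x_0)$, discretize, and invoke \Cref{lemma_perturbation} to produce an $F$-ascending path landing in $C^F(x_1)$, contradicting incomparability exactly as the paper does. The connectedness caveat you flag at the end is not treated in the paper either: its proof simply asserts that $\partial C^F(x)$ is a path-connected manifold of class $\mathscr C^1$ (for a closed convex set with nonempty interior, by \Cref{convex_prop_cont_vanish}, this can only fail when the set is a slab between two parallel hyperplanes), so on this point your proposal is faithful to, and no less complete than, the published argument.
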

\begin{proof}
Let $x \neq x'$ be two points of $\R^d$. By contradiction, we assume that they are not totally ordered, i.e. $C^F(x) \not\subseteq C^F(x')$ and $C^F(x') \not\subseteq C^F(x)$. By \Cref{convex_sets_intersect}, $\intr(C^F(x)) \cap \intr(C^F(x')) \neq \emptyset$. Let us justify that
\[\partial C^F(x) \cap \intr(C^F(x')) \neq \emptyset.\]
Take a point $y\in \intr(C^F(x)) \cap \intr(C^F(x'))$, and $z\in C^F(x') \setminus C^F(x)$. Define $y_t = (1-t)y + tz$ for $t\in [0,1]$ and $t^* = \sup\{t : y_t \in C^F(x)\}$. Since $C^F(x)$ is closed, $y_{t^*} \in \partial C^F(x)$, and since $y\in \intr(C^F(x)), z\not\in C^F(x)$ we have $t^* \in (0,1)$. Thus $y_{t^*}$ belongs to the open segment $(y,z)$ with $y\in \intr(C^F(x')), z\in C^F(x')$, which implies that $y_{t^*} \in \intr(C^F(x'))$. We have thus found a point $x^* \coloneqq y_{t^*} \in \partial C^F(x) \cap \intr(C^F(x'))$.

The boundary $\partial C^F(x)$ is a path-connected manifold of class $\mathscr C^1$ which contains $x$ (thanks to \Cref{convex_prop_cont_vanish}) and $x^*$. We may consider a $\mathscr C^1$ curve $\gamma : [0,L] \to \partial C^F(x)$ which is parameterized by arc length such that $\gamma(0)=x$ and $\gamma(L) = x^*$. Take $\eps > 0 $ small so that $B(x^*,\eps) \subseteq C^F(x')$. We shall choose $\delta > 0$ small enough such that in particular $\delta \leq \min \{L,\eps\}/2$. We start by defining $t_0 = 0, x_0 = \gamma(t_0)= x$, then $t_1 > t_0$ and $x_1 = \gamma(t_1)$ such that $\abs{x_1-x_0}=\delta$ and continue inductively so as to define $t_0 < \ldots, t_{n-1}$ and $x_0 = \gamma(t_0),\ldots, \gamma(t_{n-1})$ up to a step $n-1$ such that $L-t_{n-1} \leq \delta$. Set finally $t_n = L$ and $x_n = \gamma(t_n)$, so that $\abs{x_n-x_{n-1}} \leq \delta$ because $\gamma$ is $1$-Lipschitz. It is not difficult to show that as $\delta \to 0$
\[\sum_{i=0}^{n-1} \abs{x_{i+1}-x_i} \sim_{\delta \to 0} L\]
but obviously $\sum_{i=0}^{n-1} \abs{x_{i+1}-x_i} \sim n\delta$ thus for $\delta$ small enough it holds $n\delta \leq 2L$. Consider $\delta$ small enough so that $\delta\leq \delta_0(\eps,\gamma([0,L]),L)$ as given in \Cref{lemma_perturbation}. Applying this lemma yields the existence of a sequence of points $\tilde x_0, \ldots, \tilde x_n$ satisfying the three items \labelcref{perturbed_is_close}, \labelcref{perturbed_is_close} and \labelcref{perturbed_is_increasing} of \Cref{lemma_perturbation}. In particular $(\tilde x_0, \ldots, \tilde x_n)$ is a strictly increasing path from $\tilde x_0 = x_0 = x$ to $\tilde x_n$. Thus $x \prec \tilde x_n$. But $\tilde x_n \in B(x^*,\eps)\subseteq C^F(x')$ thus $x_n \preceq x'$ and $x\preceq x'$: a contradiction with the fact that $C^F(x)\not\subseteq C^F(x')$.
\end{proof}

From \Cref{cond-resol} and \Cref{ND-lipschitz}, recalling that $\mathcal C^F$ being totally ordered means that $\preceq_F$ is total, we have solved \Cref{main_problem} in the case of regular and non-vanishing cyclically quasi-monotone maps, as stated in the following theorem.

\begin{thm}[Existence of potentials -- regular and non-vanishing case]\label{main_theorem}
Let $F : \R^d \to \R^d$ be a cyclically quasi-monotone map which is of class $\mathscr C^1$ and satisfies \labelcref{hyp_nonzero}. There exists a lower semi-continuous quasi-convex function $f :\R^d \to \R$ such that
\[\forall x \in \R^d, \quad F(x) \in \nop f(x).\]
\end{thm}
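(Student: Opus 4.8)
The plan is to assemble the structural result \Cref{ND-lipschitz} of this section with the conditional resolution of \Cref{main_problem} obtained earlier, namely \Cref{cond-resol}. First I would note that the hypotheses of the theorem are exactly what the machinery of this section needs: a map $F : \R^d \to \R^d$ of class $\mathscr C^1$ is in particular continuous and locally Lipschitz continuous, so \Cref{convex_prop_cont_vanish}, \Cref{convex_sets_intersect}, \Cref{lemma_perturbation}, and hence \Cref{ND-lipschitz}, all apply to $F$, which moreover satisfies the non-vanishing condition \labelcref{hyp_nonzero} by assumption.

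Next, by \Cref{ND-lipschitz} the family $\C^F = (C^F(x))_{x\in\R^d}$ defined in \labelcref{def_convex_multi-map} is totally ordered for inclusion. I would then translate this into totality of the pre-order $\preceq_F$. Since $\preceq_F$ is reflexive and transitive by \Cref{transitivity_properties} \labelcref{item_pre-order}, and since $C^F(x) = \{w : w \preceq_F x\}$, one has $C^F(x) \subseteq C^F(x')$ if and only if $x \preceq_F x'$ — the forward implication is transitivity, and the converse uses $x \in C^F(x)$. Consequently, the total orderedness of $\C^F$ states precisely that for all $x,x'$ either $x \preceq_F x'$ or $x' \preceq_F x$, i.e. $\preceq_F$ is total.

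Finally, with $\preceq_F$ total and $F$ cyclically quasi-monotone, \Cref{cond-resol} directly produces a lower semi-continuous quasi-convex function $f : \R^d \to \R$ such that $F(x) \subseteq \nop f(x)$ for every $x\in\R^d$; since $F$ is single-valued this reads $F(x) \in \nop f(x)$, which is the claim.

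There is essentially no obstacle remaining in this last step: the whole difficulty of the $d$-dimensional case has been concentrated in \Cref{ND-lipschitz}, itself resting on the delicate perturbation construction of \Cref{lemma_perturbation} and on the intersection statement \Cref{convex_sets_intersect}. The only point worth double-checking is that $\mathscr C^1$ regularity suffices wherever \Cref{ND-lipschitz} (via \Cref{lemma_perturbation}) invokes local Lipschitz continuity, which it does, since a $\mathscr C^1$ map on $\R^d$ is locally Lipschitz.
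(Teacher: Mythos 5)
Your proposal is correct and follows exactly the paper's own route: the paper likewise deduces \Cref{main_theorem} by noting that a $\mathscr C^1$ map is locally Lipschitz, invoking \Cref{ND-lipschitz} to get that $\C^F$ is totally ordered (equivalently, $\preceq_F$ is total), and concluding via \Cref{cond-resol}. Only a trivial slip: in your equivalence $C^F(x)\subseteq C^F(x') \iff x\preceq_F x'$, it is the implication from the inclusion that uses $x\in C^F(x)$ and the converse that uses transitivity, not the other way around.
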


\section{Examples}\label{sec:examples}

In this section we present several examples of cyclically quasi-monotone fields that we selected to highlight the sometimes odd, behavior of the convex sets (and the candidate quasi-convex functions) obtained from our construction, as well as some difficulties that arise in building a quasi-convex function generating such fields. For each example we computed the convex sets induced by the preference relation $\preceq_F$ and we provide one or several quasi-convex functions that generate the field.

In most examples, we will only give a figure with the sets $C^F(x)=\{z \ : \ z \preceq_F x \}$, so let us briefly explain how to obtain them. We use essentially two ways. We first compute the set $\{z \ : \ z \prec_F x\}$ (recall that $\prec_F$ described in \Cref{def_strict_pre-order}) by building some explicit $F$-ascending paths (this could, sometime, be very difficult but in the examples below it is not the case). Notice that for $z \prec_F x$ to hold one must have $F(z)\neq \{0\}$. The passage from $\prec_F$ to $\preceq_F$ is then described in \Cref{def_pre-order}, \Cref{charac_pre-order} but we mostly used the characterization given in \Cref{pre-order_one-point_charac}.

\begin{xmp}[Hedgehog]\label{xmp:hedgehog}
We start with an easy example where \Cref{ND-lipschitz} does not apply, but $\C^F$ is totally ordered and so \Cref{cond-resol} still give us a potential. On $\R^d$, $d\geq 1$, we define
\[F(x) \coloneqq \begin{dcases*}
\spn_+(x)& if $x\neq 0$,\\
\R^d& if $x=0$.
\end{dcases*}
\]
$F$ is cyclically quasi-monotone since it is the normal cone multi-map associated with the euclidean norm. We stress that $F$ does not satisfy the assumptions of \Cref{ND-lipschitz} since it is not single-valued. We could take a single-valued selection of $F$, but we cannot ask it to be both continuous and non-vanishing, since continuity would imply $F(0) = 0$.

\begin{figure}[H]
  \centering
  \includegraphics[scale=0.8]{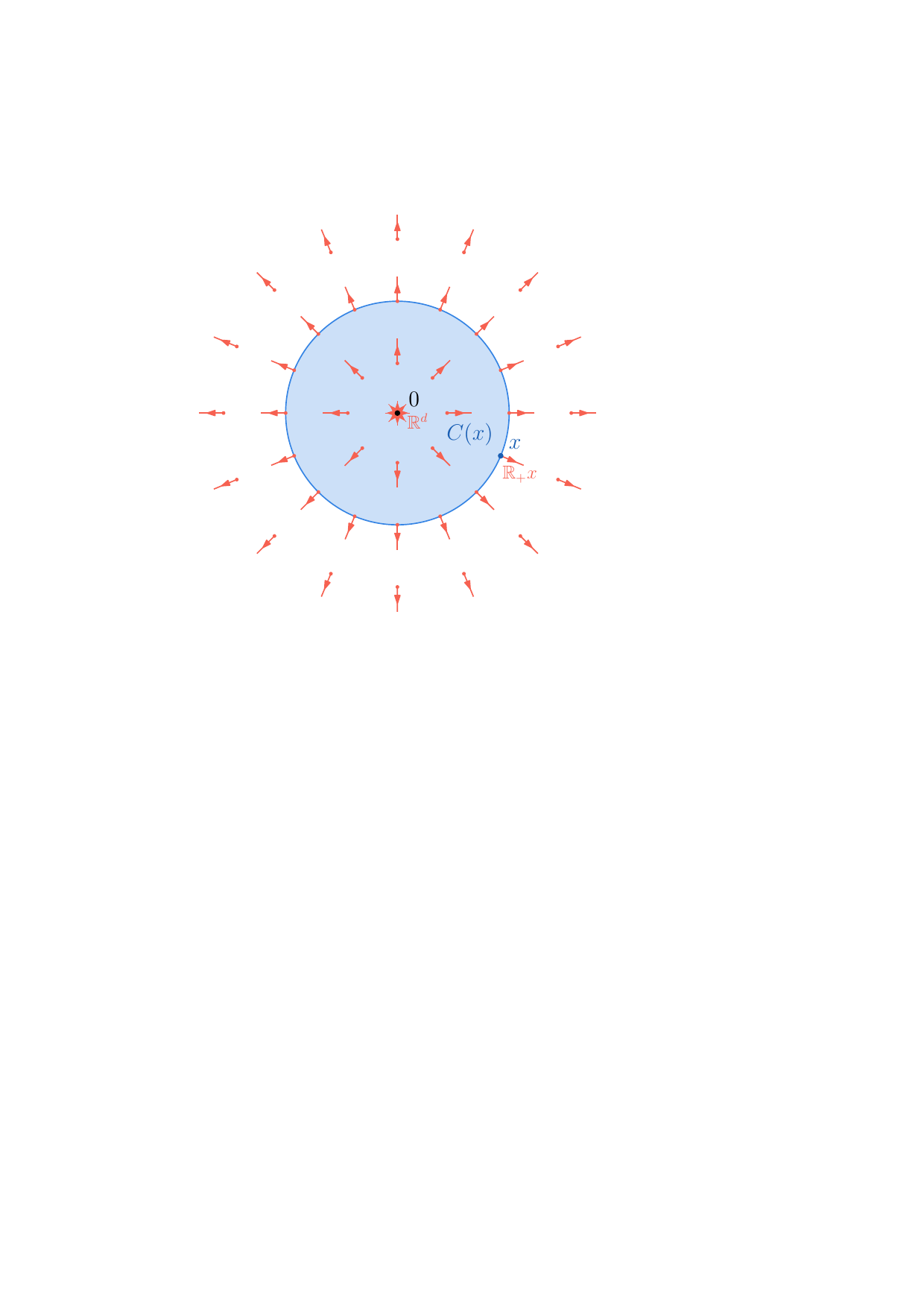}
  \caption{Hedgehog field $F$ of \Cref{xmp:hedgehog}}
\end{figure}

In dimension $d=1$, the family $\C^F$ is not totally ordered since $C^F(x) = [0,x]$ when $x\geq 0$, $C^F(x) = [x,0]$ when $x \neq 0$, but \Cref{sec:1d} shows how to build one (actually several) quasi-convex functions whose normal cone multi-map contains $F$.  

In dimension $d \geq 2$, we can also compute the family of convex sets $\C^F$ given by our construction: for any given $x$, identify all points $y \prec x$ (the open ball centered at $0$ and radius $\abs{x}$), then $C^F(x)$ is the intersection of all half spaces $\{z : (z-y)\cdot p \leq 0\}$ for $y\not\prec x$ and $p \in F(y)$. One may check, in this case, that $C^F(x) = \bar{B(0,\abs{x})}$ for every $x\in \R^d$, thus the family $\C^F$ is totally ordered and \Cref{cond-resol} provides the existence of many quasi-convex functions whose normal cone multi-map contains $F$.
\end{xmp}

\begin{xmp}[Plateau and closure]\label{xmp:plateau_jump}
On $\R^d$, $d\geq 1$, we set for every $x\in\R^d$:
\[f(x) \coloneqq \abs{x} \wedge ((\abs{x}-2)_+ + 1) = \begin{dcases*}
\abs{x}& if $\abs{x}\leq 1$\\
1& if $1 < \abs{x} \leq 2$\\
\abs{x}-1& if $\abs{x} > 2$,
\end{dcases*}\]
then
\[F \coloneqq \nop{f} \quad\text{and}\quad G \coloneqq \cl F.\]
\begin{figure}[H]
  \centering
  \includegraphics[scale=0.7]{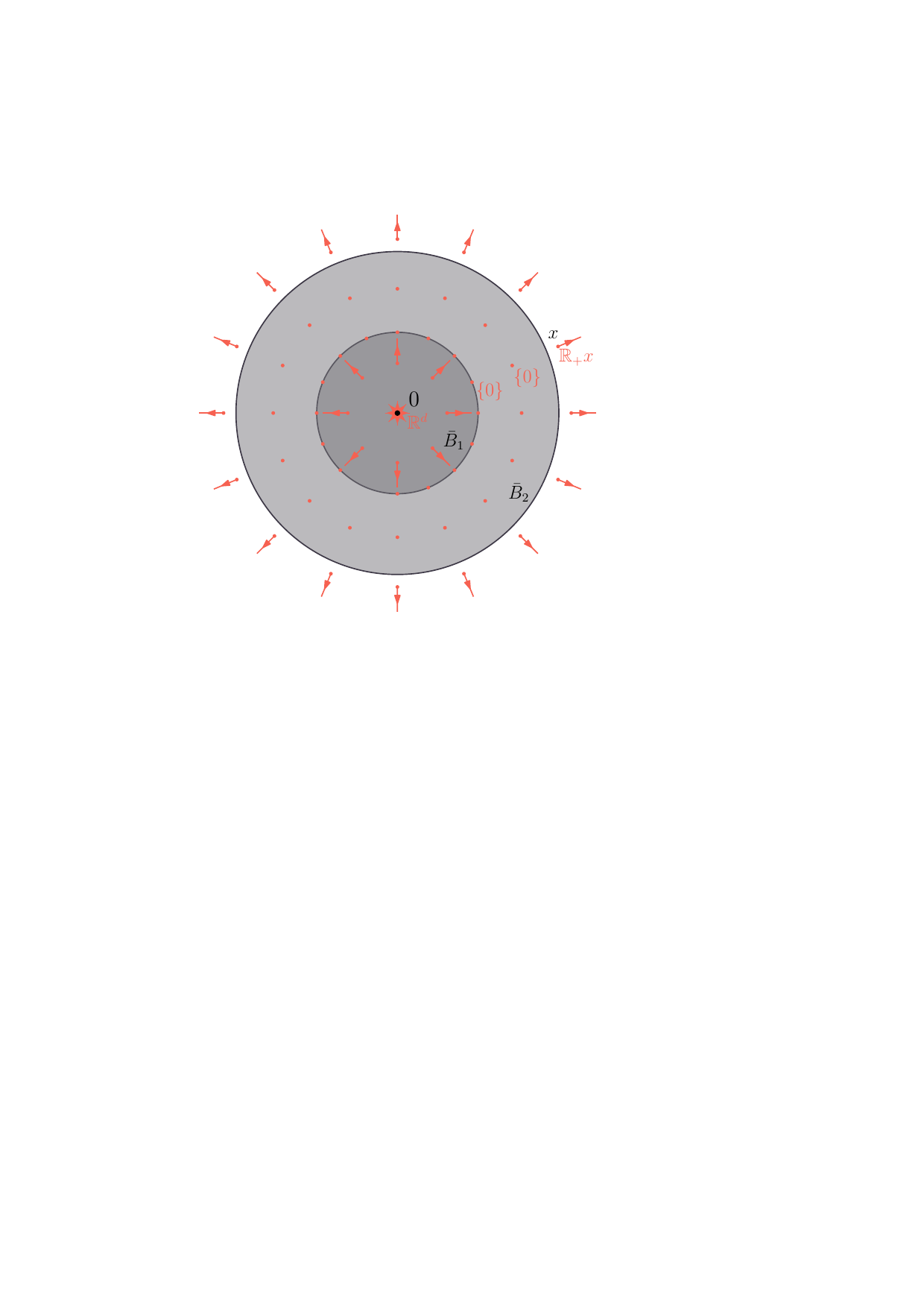}
  \caption{A field with a plateau, $F$ of \Cref{xmp:plateau_jump}}
\end{figure}
By construction $f$ is a continuous quasi-convex function, thus $F$  and $G$ are both cyclically quasi-monotone, and $f$ is a potential for $F$. They are respectively given by 
\begin{align*}
    F(x) &= \begin{dcases*}
\R^d& if $x=0$\\
\spn_+ x& if $0 <\abs{x}<1$ or $2 \leq \abs{x}$\\
$0$& if $1 \leq \abs{x} < 2$.
\end{dcases*}\\
\shortintertext{and}
G(x) &= \begin{dcases*}
\R^d& if $x=0$\\
\spn_+ x& if $0 <\abs{x}\leq 1$ or $2 \leq \abs{x}$\\
$0$& if $1 < \abs{x} < 2$.
\end{dcases*}
\end{align*}
The family of convex sets $\mathcal C^F$ and $\mathcal C^G$ can be explicitly computed as explained at the beginning of the section, and one may check that for every $x\in \R^d$:
\begin{equation}\label{xmp_plateau}
    C^F(x) = C_f(x) =  \begin{dcases*}
\bar B_{\abs{x}}& if $\abs{x}<1$ or $2 \leq \abs{x}$\\
\bar B_2& if $1 \leq \abs{x} < 2$.
\end{dcases*}
\end{equation}
However, when $\abs{x} =1$, $C^G(x) = \bar B_1 \neq \bar B_2 = C^F(x)$ and $G(x) \not\subseteq \nop f(x)$. This shows that passing to the closure of a cyclically monotone operator $F$, albeit given by a continuous potential $f$, changes the family of convex sets $\mathcal F^F$, as well as the set of admissible potentials. This happens because the original field has a plateau (an area when it is trivial), and the closure operation reduces its size.

Notice that $\abs{\cdot}$ is a continuous potential for both $F$ and $G$, but if we want a potential $g$ such that $C^G(x) = C_f(x)$ and $G(x) =  \nop g(x)$ for every $x$, $g$ needs to be discontinuous at points $x \in \partial B_1$, for instance:
\[g(x) \coloneqq \begin{dcases*}
\abs{x}& if $\abs{x}\leq 1$\\
2& if $1 < \abs{x} \leq 2$\\
\abs{x}& if $\abs{x} > 2$.
\end{dcases*}\]
\end{xmp}

\begin{xmp}[Stadium field]\label{xmp:stadium}
In $\R^2$ let $\Sigma$ be the segment $\{0\}\times [-1,1]$ and let $d_\Sigma: \R^2\to \R^+$ be the distance from $\Sigma$ (i.e. $d_\Sigma (x)= \dist (x, \Sigma))$.
\begin{figure}[H]
  \centering
  \includegraphics[scale=0.7]{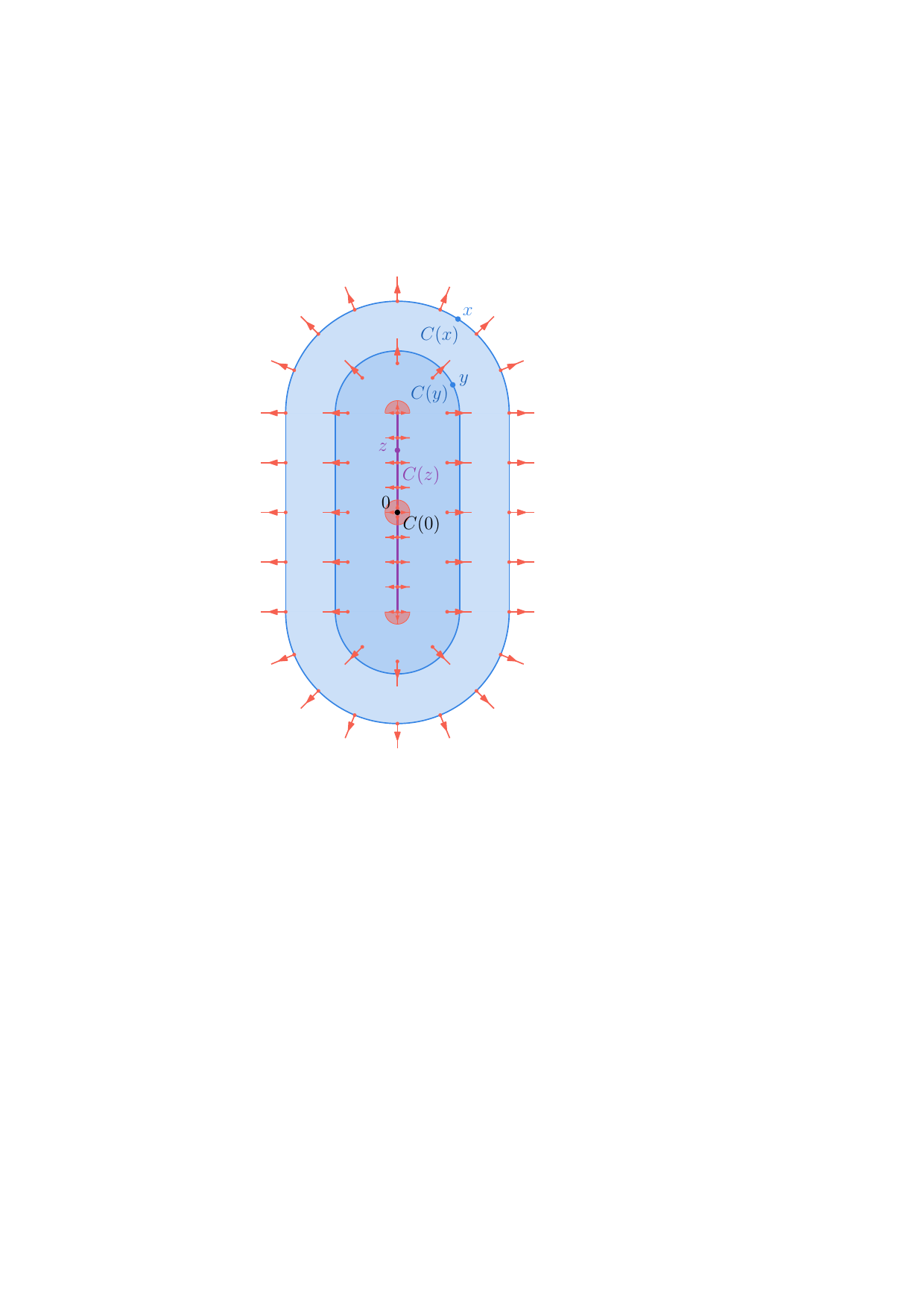}
  \caption{Stadium field of \Cref{xmp:stadium}}\label{fig:stadium}
\end{figure}

Consider the multi-valued vector field 
\[F (x) \coloneqq \begin{dcases*}
\R^2& if $x=0$\\
\R \times \{0\}& if $x$ belongs to the relative interior of $\Sigma$\\
\R \times [0,+\infty)& if $x$ is the upper extremum of $\Sigma$\\
\R \times (-\infty,0]& if $x$ is the lower extremum of $\Sigma$\\
\nabla d_\Sigma (x)&  otherwise.
\end{dcases*}\]
In this case, the sets $C^F(x)$ are not essential to find a lower semicontinuous and quasi-convex potential $f$, however it is easy to see that they are like in \Cref{fig:stadium}: $C^F(0)=\{0\}$ (since $0\prec x$ for every $x\neq 0$ although it is slightly difficult to see it in the figure), $C^F(x) = \Sigma$ for every $x \in \Sigma\setminus\{0\}$ and, for $x\not \in \Sigma$, they coincide with the sub-level sets of $d_\Sigma$. One can see that computing the sets $C^F (x)$ requires, in this case, $F$-ascending paths of length at most $2$.  
It is also immediate to see that $F(x)\subseteq \partial \chi_{C^F (x)}(x)$. 
As admissible potential $f$,  we may consider (see \labelcref{pre_def_function_from_set_family})
\[f (x) \coloneqq \begin{dcases*}
0 & if $x=0$\\
3=(\dim (\Sigma)+ \len(\Sigma))& if $x \in \Sigma\setminus \{0\}$\\
2+ \LL ^2 (C^F(x))&  otherwise.
\end{dcases*}\]

\end{xmp}

\begin{xmp}[$\tfrac{3}{4}$ Hedgehog]\label{xmp:three_quarters_hedgehog}
Here we consider a variant of \Cref{xmp:hedgehog}, namely the multi-valued vector field $F:\R^2 \multimap \R^2$ given by
\begin{figure}[H]
  \centering
  \includegraphics[scale=0.8]{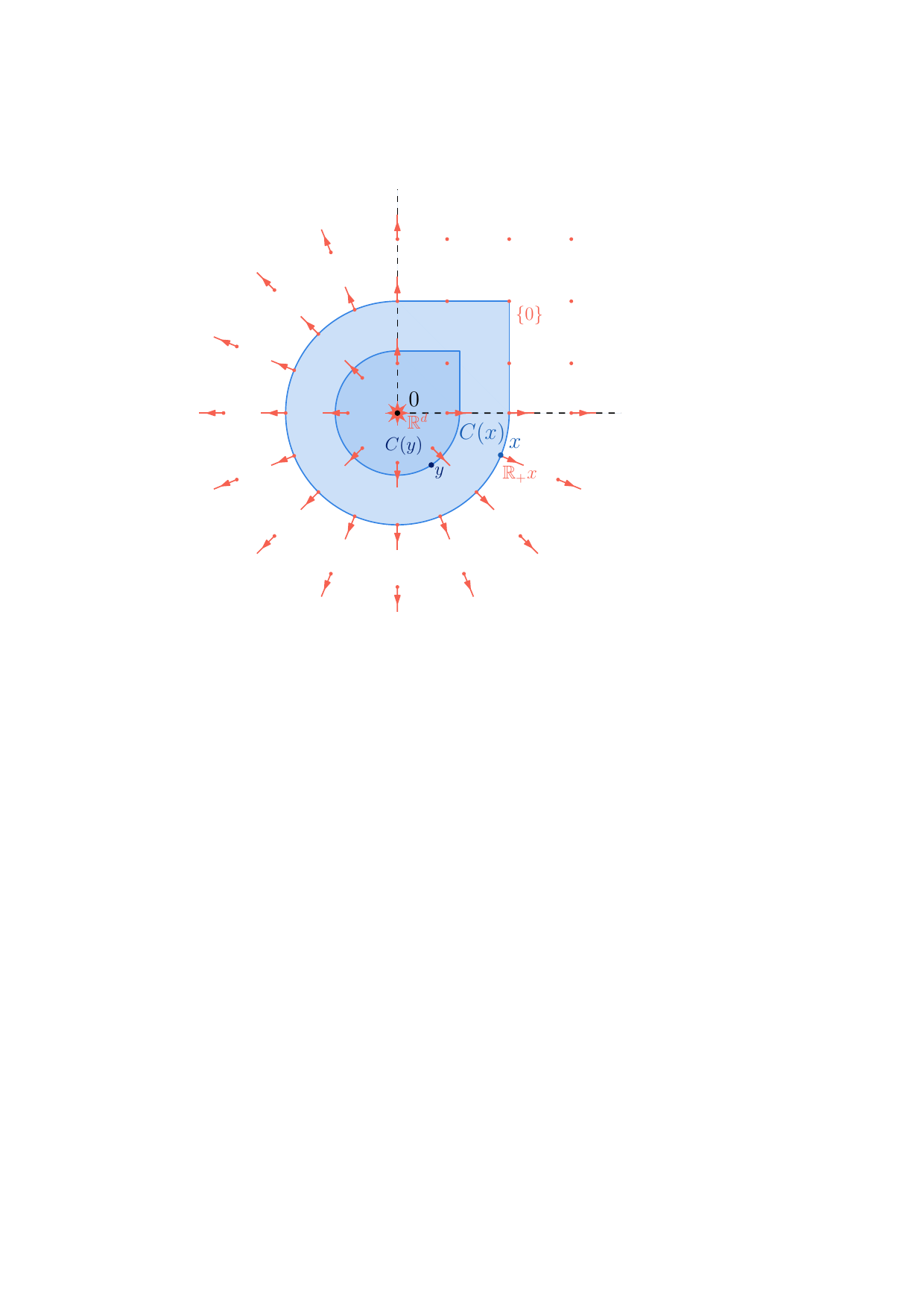}
  \caption{$\tfrac{3}{4}$ hedgehog field of \Cref{xmp:three_quarters_hedgehog}} \label{fig:three_quarters_hedgehog}
\end{figure}
\end{xmp}
 
\[ F(x)\coloneqq \begin{dcases*}
\spn_+(x)& if $x\neq 0$ and $\min \{ x_1 , x_2 \} \leq 0$;  \\
\R^d& if $x=0$,\\
0     & \mbox{otherwise}.
\end{dcases*}
\] 
In this case, still using $F$-ascending paths of length at most 2, one can see that the sets $C^F (x)$ are the drop-shaped sets in \Cref{fig:three_quarters_hedgehog}. The family $\C^F$ is totally ordered so that \Cref{cond-resol} gives a quasi-convex and lower-semi-continuous potential $f$. 
In this case it is slightly more difficult to produce an explicit $f$ and this starts to show the necessity of an existence theorem.

\begin{xmp}[Single circle]\label{xmp:single_circle} In this example the family $\C^F$ will not be totally ordered.
Let $d=2$ and 
\[
F(x)= \begin{dcases*}
x & if $\abs{x}=1$;\\
0 & otherwise.
\end{dcases*}
\]
Notice that $F(x)\in \nop f(x)$ for several quasi-convex functions $f$, for example $f(x)=|x|$, and then $F$ is cyclically quasi-monotone.
Another example of potential is 
\[ f_1(x)\coloneqq \begin{dcases*}
0 & if $|x|\leq 0$;  \\
1     & \mbox{otherwise}.
\end{dcases*}
\] 
Notice that in the case of $f_1$ it holds $\spn_+ F(x)= \nop {f_1(x)} $ for every $x$.

We can describe the sets $C^F(x)$ explicitly. If $|x| \leq 1$ then $C^F(x)=\overline{B(0,1)}$ the closed unit ball. If $|x|>1$ then $C^F (x)$ is the sharp drop-like set in \Cref{fig:single_circle}. In this case neither \Cref{ND-lipschitz} nor \Cref{cond-resol} apply but we explicitly showed two potentials for the vector field. 
\begin{figure}[H]
  \centering
  \includegraphics[scale=0.8]{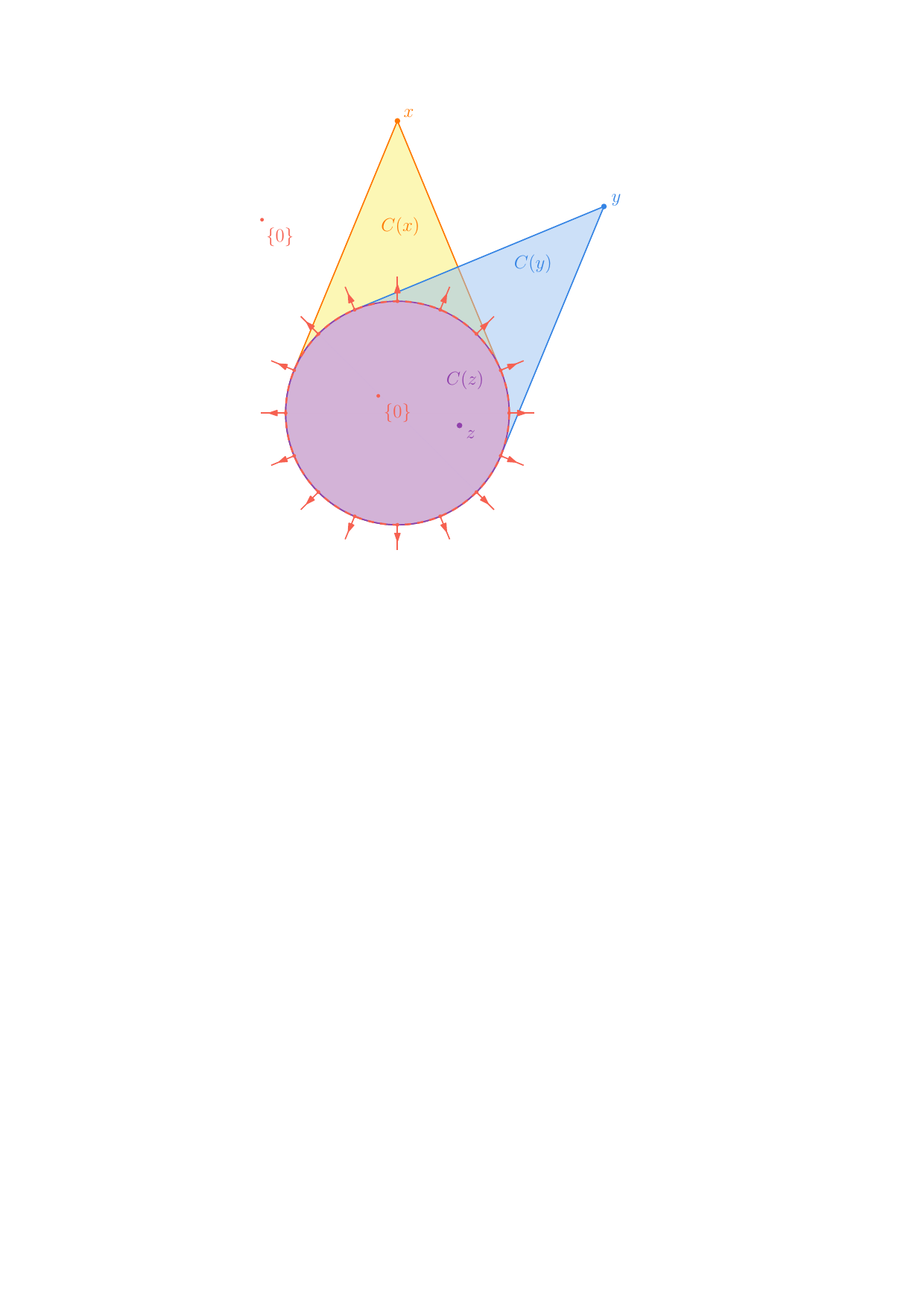}
  \caption{Single circle field of \Cref{xmp:single_circle}}\label{fig:single_circle}
\end{figure}
\end{xmp}

\begin{xmp}[$\tfrac 12$ hedgehog]\label{xmp:half_hedgehog}
On $\R^d$, $d\geq 1$, we define
\[F(x) \coloneqq \begin{dcases*}
\spn_+(x)& if $x\neq 0$ and $x_1 \leq 0$,\\
\chi_{\{x_1 \leq 0\}}\R^d& if $x=0$,\\
0 & otherwise.
\end{dcases*}
\]
\begin{figure}[H]
  \centering
  \includegraphics[scale=0.7]{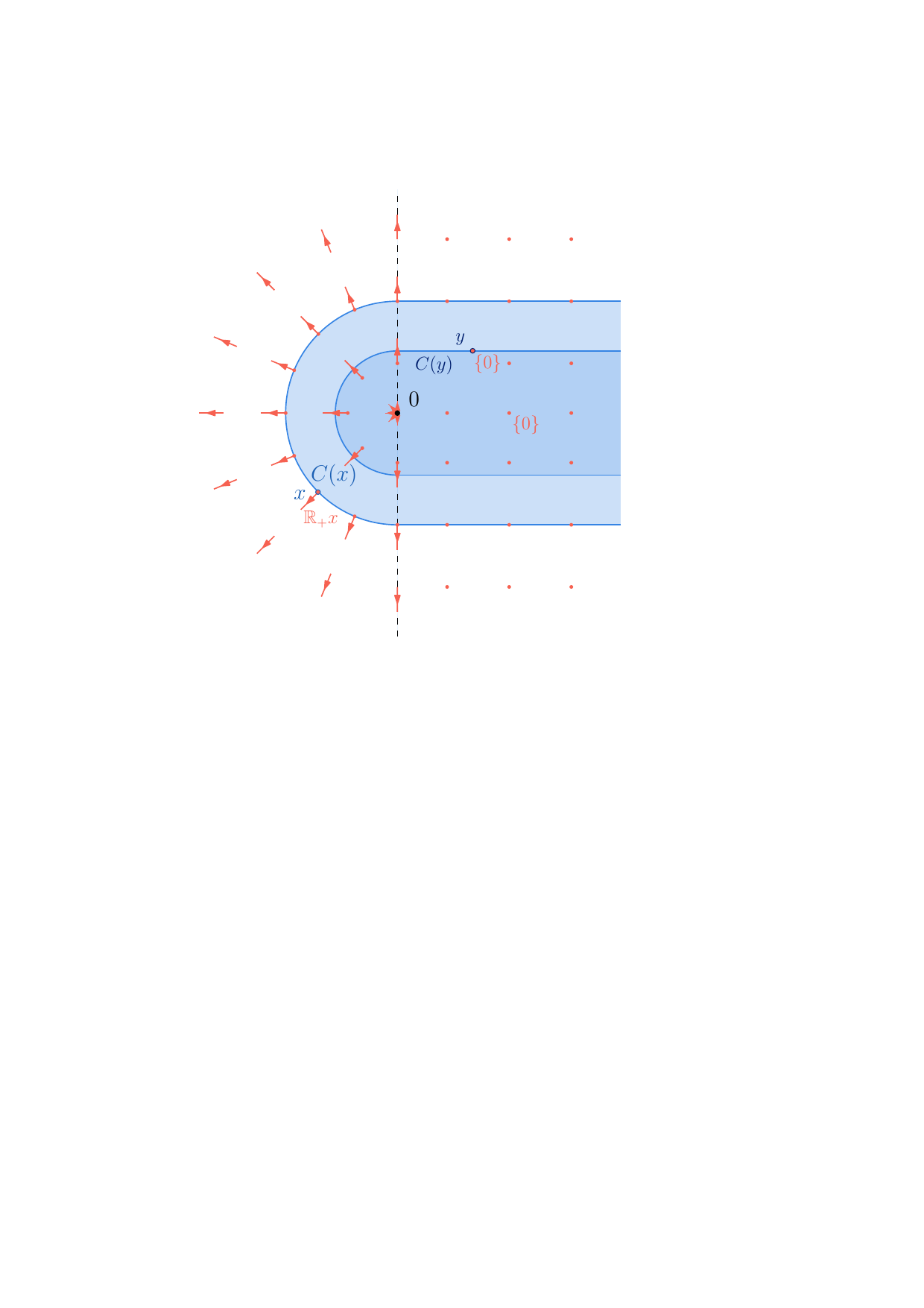}
  \caption{$\tfrac{1}{2}$ hedgehog field of \Cref{xmp:half_hedgehog}}
\end{figure}
In this case, the family $\C^F$ is totally ordered and is depicted in the figure above. Denoting by $\Sigma$ the half-line $\{x \in \R^d \ : \ x_1  \geq 0\}$ An explicit potential is given by 
\[f(x)= d_\Sigma (x).\]

\end{xmp}

\begin{xmp}[$\tfrac 1 2$ norm - $\frac 12$ constant]\label{xmp:half_hedgehog_half_constant}
We modify the previous example choosing
\[F(x)= (1,0,\dots,0), \ \mbox{if} \ x_1>0.\]

\begin{figure}[H]
  \centering
  \includegraphics[scale=0.8]{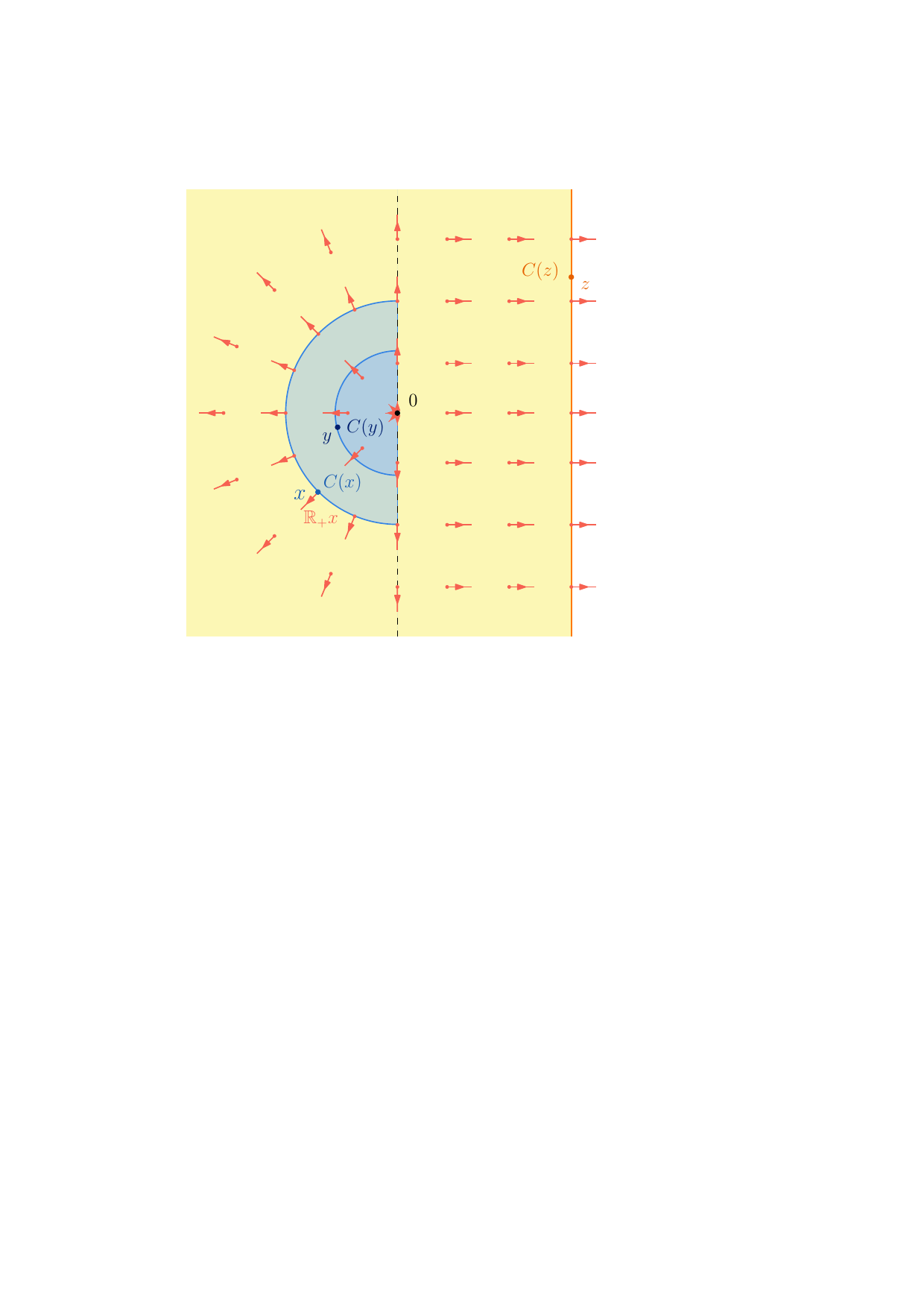}
  \caption{$\tfrac{1}{2}$ hedgehog - $\tfrac{1}{2}$ constant field of \Cref{xmp:half_hedgehog_half_constant}}\label{fig:half_edgehog_half_constant}
\end{figure}
\end{xmp}

Also in this case the family $\C^F$ is totally ordered, but more complicated. It is described as follows (see also \Cref{fig:half_edgehog_half_constant}).
If $\overline{x}_1 \leq 0$ then 
\[C^F(\overline{x})= \overline{B(0, \|\overline{x}\|)} \cap \{x\ :\  x_1 \leq 0\}\]
i.e. a closed ball intersected with the \enquote{negative half-space}.
If $\overline{x}_1 >0$ then 
\[C^F(\overline{x})= \{x\ :\  x_1 \leq \overline{x}_1\},\]
i.e. half-space. In this case it is more difficult to give an explicit quasi-convex potential $f$ but we can rely on \Cref{cond-resol}. We can build an explicit potential $f$ in such a way that its sub-level sets are increasing half-disks which eventually fill the whole negative half-space, followed by a family of increasing and parallel half spaces that fill the whole $\R^2$:
\[f(x) = \begin{dcases*}
    \arctan \norm{x}& if $x_1 \leq 0$;\\
    \frac{\pi}{2}+x_1& if $x_1 > 0$.
\end{dcases*}\]

\section*{Acknowledgments:}
This paper has been written during some visits of the first author at the Laboratory CEREMADE of Université Paris Dauphine-PSL and the MOKAPLAN project-team of INRIA Paris. The authors gratefully acknowledge the warm hospitality and support of these institutions. 

The work of the first author is partially financed by the \textit{``Fondi di ricerca di ateneo''} of the University of Firenze and partially financed by the EU-Next Generation EU, (Missione 4, Componente 2, Investimento 1.1 \textit{Progetti di Ricerca di Rilevante Interesse Nazionale} (PRIN), CUPB53D23009310006 - (2022J4FYNJ). The first author is member of the research group GNAMPA of INdAM.

Both authors warmly thank Aris Daniilidis for several remarks, references and suggestions on an early version of the paper.

\sloppy
\printbibliography
\end{document}